\documentclass[pdflatex,sn-mathphys-num]{sn-jnl}% Math and Physical Sciences Numbered Reference Style
\usepackage{graphicx}%
\usepackage{multirow}%
\usepackage{amsmath,amssymb,amsfonts}%
\usepackage{amsthm}%
\usepackage{mathrsfs}%
\usepackage[title]{appendix}%
\usepackage{xcolor}%
\usepackage{textcomp}%
\usepackage{manyfoot}%
\usepackage{booktabs}%
\usepackage{algorithm}%
\usepackage{algorithmicx}%
\usepackage{algpseudocode}%
\usepackage{listings}%
\theoremstyle{thmstyleone}%
\newtheorem{theorem}{Theorem}[section]% meant for sectionwise numbers
\newtheorem{lemma}[theorem]{Lemma}%
\newtheorem{corollary}[theorem]{Corollary}%

\theoremstyle{thmstyletwo}%
\newtheorem{example}{Example}[section]% 添加了 [section]
\newtheorem{remark}{Remark}[section]% 添加了 [section]
\numberwithin{equation}{section}
\theoremstyle{thmstylethree}%
\newtheorem{definition}{Definition}[section]% 添加了 [section]

\begin{document}

\title[$\mathcal F$-Transitivity on Lattice Graphs]
{\(\mathcal F\)-Transitivity of Backward Shifts on Lattice Graphs}

\author*[1]{\fnm{Xiang} \sur{Chen}}\email{2020233018@tju.edu.cn}

\author[2]{\fnm{Cui} \sur{Wang}}\email{cuiwang2016@126.com}

\author[1]{\fnm{Ze-hua} \sur{Zhou}}\email{zehuazhoumath@aliyun.com; zhzhou@tju.edu.cn}

\affil*[1]{\orgdiv{School of Mathematics}, \orgname{Tianjin University},
\orgaddress{\city{Tianjin}, \postcode{300350}, \country{P.R. China}}}

\affil[2]{\orgdiv{School of Mathematics and Statistics},
	\orgname{Henan Normal University},
	\orgaddress{\city{Xinxiang}, \postcode{453007}, \country{P.R. China}}}

\abstract{We study \(\mathcal F\)-transitivity of backward shifts on finite- and
	infinite-width directed lattice graphs. For the unilateral and
	bilateral finite-width lattices on weighted \(\ell^p\)- or
	\(c_0\)-spaces, we obtain exact weight characterizations of
	\(\mathcal F\)- and \(\widetilde{\mathcal F}\)-transitivity for an arbitrary
	Furstenberg family \(\mathcal F\). If \(\mathcal F\) is finitely invariant, these conditions also
	characterize topological \(\mathcal F\)-recurrence. For the infinite quadrant lattice with
	radial weights on weighted \(\ell^2\)-spaces, a finite-dimensional
	reduction and sharp smallest-singular-value estimates for Pascal-type
	transfer matrices yield equivalent characterizations of
	\(\mathcal F\)-transitivity, hypercyclicity and mixing. This provides
	a partial answer, in the radial \(\ell^2\)-setting, to the open problem
	posed by Baranov, Lishanskii and Papathanasiou concerning
	hypercyclicity on the infinite lattice graph. Examples at the critical
	exponential growth rate illustrate the finer dynamical information
	provided by our criteria.}

\keywords{Backward shift, lattice graph, Furstenberg family,
\(\mathcal F\)-transitivity}

\pacs[MSC Classification]{Primary 47A16; Secondary 47B37, 05C20}

\maketitle

\section{Introduction}
Linear dynamics studies the orbit structure of continuous linear
operators, with hypercyclicity, weak mixing and mixing among its
central notions; we refer to \cite{CC,BM} for comprehensive accounts
of the subject. Let \(X\) be a separable infinite-dimensional Banach space,
and let \(T\) be a bounded linear operator on \(X\). The operator \(T\) is called
\emph{hypercyclic} if there exists \(x\in X\) such that
\[
\operatorname{Orb}(x,T)
=
\{T^nx:n\in\mathbb N_0\}
\]
is dense in \(X\); such a vector \(x\) is called a
\emph{hypercyclic vector} for \(T\). Weighted shifts constitute one of
the principal classes of examples in linear dynamics, and their
hypercyclic behavior on classical sequence spaces was characterized
by Salas \cite{CRS}. Furstenberg families provide a unified
return-time framework for studying hypercyclicity and stronger
transitivity properties. This approach has been developed for strong
transitivity properties of general operators \cite{BMPP}, for
\(\mathcal F\)-transitive and \(d\mathcal F\)-transitive families of
composition operators on \(L^p\)-spaces \cite{Cx3}, and for
\(\mathcal F\)-transitivity and recurrence of translation semigroups
\cite{HSX}. The linear structure of sets of \(\mathcal F\)-recurrent vectors
has also been studied by Grivaux, L\'opez-Mart\'inez and Peris
\cite{GLP}.

The study of shift operators has subsequently been extended from
classical sequences to spaces indexed by directed trees and more
general directed graphs. Jab{\l}o{\'n}ski, Jung and Stochel
\cite{CKS} established a systematic operator-theoretic framework for
weighted shifts on directed trees. Grosse-Erdmann and Papathanasiou obtained characterizations of
hypercyclicity and mixing \cite{CX}, as well as chaos \cite{KDP},
for weighted shifts on directed trees. Further dynamical
properties, including \(\mathcal F\)-transitivity and topological
\(\mathcal F\)-recurrence, were investigated by Abakumov and Abbar
\cite{AA}, while hypercyclic algebras were studied by Abbar and Costa
Jr.\ \cite{AC}. Related shift problems have also been considered on
the Lipschitz and Hardy spaces of trees \cite{CR,KR}, as well as in
the operator-valued setting of \(\ell^p\)- and \(c_0\)-sums
\cite{MP}. More recently, Baranov, Lishanskii and Papathanasiou
\cite{BLP} initiated the  study of hypercyclic backward
shifts on directed lattice graphs, obtaining complete
characterizations for unilateral and bilateral finite-width lattices
and partial results for the infinite  lattice. They also posed the following open problem:

\medskip
\noindent\textit{Open problem.}
Find necessary and sufficient conditions 
for the backward shift on the infinite lattice \(G_\infty\)
to be hypercyclic on \(\ell^p(V_\infty,\mu)\),
\(1\leq p<\infty\), or \(c_0(V_\infty,\mu)\).
\medskip

The present paper has two main contributions. Our first contribution
concerns the unilateral and bilateral finite-width lattices. For an
arbitrary Furstenberg family \(\mathcal F\), we obtain explicit weight
characterizations of both \(\mathcal F\)- and
\(\widetilde{\mathcal F}\)-transitivity on weighted \(\ell^p\)- and
\(c_0\)-spaces. If \(\mathcal F\) is finitely invariant, the same
conditions also characterize topological \(\mathcal F\)-recurrence.
These results extend the hypercyclicity and weak-mixing
characterizations in \cite{BLP}. The proofs use explicit formulas
for the right inverses constructed in \cite{BLP} and adapt its
triangular and finite-window estimates to the Furstenberg-family
setting. In particular, the bilateral argument controls both
directions at common return times and excludes finitely many
initial times without assuming finite invariance of \(\mathcal F\).
Our second contribution gives a partial answer to the open problem
in \cite{BLP} for the infinite lattice. For radial weights on
\(\ell^2(V_\infty,\mu)\), we reduce the action of \(B^n\) between
diagonal levels to rectangular Pascal-type matrices. A sharp
estimate for their smallest singular values then shows that the
optimal right-inverse norms of the corresponding transfer matrices
are comparable to explicit quantities \(\Phi_q(n)\). Lifting these
finite-dimensional right inverses through the diagonal isometries
\(J_q\), we obtain equivalent weight characterizations of
\(\mathcal F\)-transitivity and, as consequences, exact criteria for
hypercyclicity and mixing in the radial \(\ell^2\)-setting.
A radial counterexample also shows that the necessary condition
in \cite[Proposition~5.1]{BLP} is not sufficient, answering
negatively the question raised after that proposition.

The paper is organized as follows. Section~2 recalls the necessary
background on lattice shifts, Furstenberg families and transitivity.
Section~3 treats the unilateral and bilateral finite-width lattices,
whereas Section~4 develops the finite-dimensional reduction and
applies it to radial shifts on the infinite lattice.
\section{Preliminaries}

\subsection{Lattice graphs and their backward shifts}

Recall that a directed graph is a pair \(G=(V(G),E(G))\), where
\(V(G)\) is its vertex set and
\[
E(G)\subseteq V(G)\times V(G)
\]
is its set of directed edges. For \(v\in V(G)\), let
\[
\operatorname{Chi}(v)
=
\{u\in V(G):(v,u)\in E(G)\}
\]
denote the set of children of \(v\).
Given two directed graphs \(G\) and \(H\), their Cartesian product
\(G\times H\) has vertex set
\[
V(G\times H)=V(G)\times V(H) \quad \text{and} \quad \bigl((u,u'),(v,v')\bigr)\in E(G\times H)
\]
if and only if
\[
u=v\ \text{and}\ (u',v')\in E(H)
\quad\text{or}\quad
u'=v'\ \text{and}\ (u,v)\in E(G).
\]

Following \cite[Section~2]{BLP}, we consider the three directed
lattice graphs
\[
G_m=[1,m]\times\mathbb N,
\qquad
\widetilde G_m=[1,m]\times\mathbb Z,
\qquad
G_\infty=\mathbb N_0\times\mathbb N_0,
\]
where \([1,m]=\{1,\ldots,m\}\). Each factor is endowed with its
natural orientation \(k\to k+1\). We write \(v_{i,j}=(i,j)\).

Let \(G=(V,E)\) be any of these graphs, and let
\(\mu:V\to\mathbb C\setminus\{0\}\) be a weight. For each
\(v_{i,j}\in V\), put
\[
\mu_{i,j}=\mu(v_{i,j}).
\]
We consider the spaces
\[
X=\ell^p(V,\mu),\quad 1\leq p<\infty,
\qquad\text{or}\qquad
X=c_0(V,\mu),
\]
of functions \(f:V\to\mathbb C\), equipped with the usual
weighted norms
\[
\|f\|_{\ell^p(V,\mu)}
=
\biggl(
\sum_{v_{i,j}\in V}
|\mu_{i,j}f(v_{i,j})|^p
\biggr)^{1/p},
\qquad
\|f\|_{c_0(V,\mu)}
=
\sup_{v_{i,j}\in V}|\mu_{i,j}f(v_{i,j})|.
\]

The backward shift associated with \(G\) is defined by
\begin{equation}\label{eq:def-lattice-shift}
	(Bf)(v)
	=
	\sum_{u\in\operatorname{Chi}(v)}f(u),
	\qquad v\in V.
\end{equation}

By the boundedness characterization
\cite[Eq.~(2.2)]{BLP}, written so as to include the boundary vertices,
the operator \(B\) is bounded on \(X\) if and only if there exists
\(C_{\mu}>0\) such that
\begin{equation}\label{eq:lattice-boundedness}
	|\mu_{i,j}|
	\leq
	C_{\mu}
	\min\left\{
	|\mu_{k,l}|:
	v_{k,l}\in\operatorname{Chi}(v_{i,j})
	\right\},
	\qquad v_{i,j}\in V.
\end{equation}
Throughout this paper, all backward shifts under consideration are
assumed to be bounded.
\subsection{Furstenberg families and transitivity}
\label{subsec:Furstenberg-families}
We first recall the notions concerning Furstenberg families that will
be used throughout the paper.
\begin{definition}
	A nonempty collection \(\mathcal F\) of subsets of
	\(\mathbb N_0\) is called a \emph{Furstenberg family} if the following
	conditions hold:
	\begin{enumerate}
		\item[\textnormal{(i)}]
		every member of \(\mathcal F\) is infinite;
		
		\item[\textnormal{(ii)}]
		if \(A\in\mathcal F\) and
		\(A\subseteq C\subseteq\mathbb N_0\), then \(C\in\mathcal F\).
	\end{enumerate}
\end{definition}

Let \(\mathcal F\) be a Furstenberg family. Following \cite{BMPP}, we
denote by \(\widetilde{\mathcal F}\) the enlarged Furstenberg family consisting
of all sets \(A\subseteq\mathbb N_0\) such that, for every
\(N\in\mathbb N_0\), there exists \(D\in\mathcal F\) satisfying
\[
\bigl(D+[-N,N]_{\mathbb Z}\bigr)\cap\mathbb N_0
\subseteq A,
\]
where
\[
[-N,N]_{\mathbb Z}
=
\{-N,-N+1,\ldots,N-1,N\}
\]
and
\[
D+[-N,N]_{\mathbb Z}
=
\{m+n:m\in D,\ n\in[-N,N]_{\mathbb Z}\}.
\]
Taking \(N=0\) shows that
\(
\widetilde{\mathcal F}\subseteq\mathcal F.
\)
Moreover, every member of \(\widetilde{\mathcal F}\) is thick, that is, it
contains intervals of arbitrarily large finite length.

Let \(X\) be a separable infinite-dimensional Banach space and let \(T\) be a bounded linear operator on \(X\). For nonempty open sets \(U,V\subseteq X\), the
return time set from \(U\) to \(V\) is defined by
\[
N_T(U,V)
=
\{n\in\mathbb N_0:T^n(U)\cap V\neq\phi\}.
\]

\begin{definition}
	Let \(\mathcal F\) be a Furstenberg family on \(\mathbb N_0\).
	\begin{enumerate}
		\item[\textnormal{(i)}]
		The operator \(T\) is called
		\emph{\(\mathcal F\)-transitive} if
		\(
		N_T(U,V)\in\mathcal F
		\)
		for every pair of nonempty open sets \(U,V\subseteq X\).
		
		\item[\textnormal{(ii)}]
		The operator \(T\) is called
		\emph{topologically \(\mathcal F\)-recurrent} if
		\(
		N_T(U,U)\in\mathcal F
		\)
		for every nonempty open set \(U\subseteq X\).
	\end{enumerate}
\end{definition}

\begin{definition}
	Let \(T\in\mathcal L(X)\).
	\begin{enumerate}
		\item[\textnormal{(i)}]
		The operator \(T\) is called \emph{weakly mixing} if
		\(T\oplus T\) is hypercyclic on \(X\oplus X\), where
		\[
		(T\oplus T)(x,y)=(Tx,Ty),
		\qquad (x,y)\in X\oplus X.
		\]
		
		\item[\textnormal{(ii)}]
		The operator \(T\) is called \emph{mixing} if, for every pair of
		nonempty open sets \(U,V\subseteq X\), there exists
		\(n_0\in\mathbb N_0\) such that
		\[
		T^n(U)\cap V\neq\phi,
		\qquad n\geq n_0.
		\]
	\end{enumerate}
\end{definition}

\begin{remark}\label{rem:classical-Furstenberg-families}
	Consider the following standard Furstenberg families:
	\[
	\begin{aligned}
		\mathcal F_\infty
		&=
		\{A\subseteq\mathbb N_0:A\text{ is infinite}\},\\
		\mathcal F_{\mathrm{thick}}
		&=
		\{A\subseteq\mathbb N_0:
		A\text{ contains arbitrarily long intervals}\},\\
		\mathcal F_{\mathrm{cof}}
		&=
		\{A\subseteq\mathbb N_0:
		\mathbb N_0\setminus A\text{ is finite}\}.
	\end{aligned}
	\]
	For every continuous linear operator on a separable
	infinite-dimensional Banach space,
	\[
	\begin{aligned}
		T\text{ is hypercyclic}
		&\quad\Longleftrightarrow\quad
		T\text{ is }\mathcal F_\infty\text{-transitive},\\
		T\text{ is weakly mixing}
		&\quad\Longleftrightarrow\quad
		T\text{ is }\mathcal F_{\mathrm{thick}}\text{-transitive},\\
		T\text{ is mixing}
		&\quad\Longleftrightarrow\quad
		T\text{ is }\mathcal F_{\mathrm{cof}}\text{-transitive}.
	\end{aligned}
	\]
	See, for instance, \cite{BMPP}.
\end{remark}

\section{\(\mathcal F\)-transitivity on finite-width lattice graphs}
In \cite[Theorems~2.1 and~2.2]{BLP}, hypercyclicity and weak mixing
were characterized for unilateral and bilateral finite-width lattice
graphs. We extend these results to \(\mathcal F\)-transitivity for an
arbitrary Furstenberg family.

\subsection{The unilateral lattice}

Let \(m\in\mathbb N\). Following \cite{BLP}, consider the directed
lattice graph
\[
G_m=(V_m,E_m),
\qquad
V_m=\{v_{i,j}:1\leq i\leq m,\ j\in\mathbb N\},
\]
with edge set
\[
\begin{aligned}
	E_m
	={}&
	\bigl\{(v_{i,j},v_{i,j+1}):
	1\leq i\leq m,\ j\in\mathbb N\bigr\}\\
	&\cup
	\bigl\{(v_{i,j},v_{i+1,j}):
	1\leq i<m,\ j\in\mathbb N\bigr\}.
\end{aligned}
\]
The case \(m=3\) is illustrated in
Figure~\ref{fig:finite-width-lattice}.

\begin{figure}[htbp]
	\centering
	\includegraphics[width=0.5\textwidth]{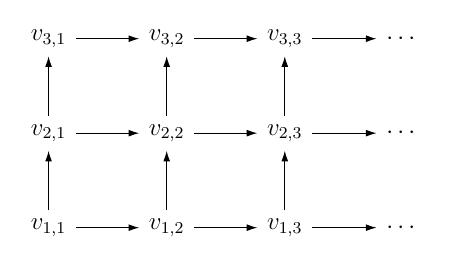}
	\caption{The directed lattice graph \(G_m\) for \(m=3\).}
	\label{fig:finite-width-lattice}
\end{figure}

Write \(e_{i,j}=e_{v_{i,j}}\) for \(v_{i,j}\in V_m\), and put
\(D=c_{00}(V_m)\). For integer indices, set \(e_{i,j}=0\)
whenever \(i\notin\{1,\ldots,m\}\) or \(j\leq0\), and use
the convention that \(\binom nr=0\) for \(r>n\).
By \cite[Eq.~(3.1)]{BLP}, for \(1\leq i\leq m\),
\(j\in\mathbb N\), and \(n\in\mathbb N\),
\begin{equation}\label{eq:iterate-basis}
	B^ne_{i,j}
	=
	\sum_{r=0}^{i-1}
	\binom nr e_{i-r,j-n+r}.
\end{equation}
Thus terms with \(r>n\) or \(j-n+r\leq0\) vanish.

For later use, we record an explicit form of the right inverses
constructed recursively in \cite[Eqs.~(3.2)--(3.4)]{BLP}. For
\(n\in\mathbb N\), define \(R_n:D\to D\) by
\begin{equation}\label{eq:right-inverse-definition}
	R_ne_{i,j}
	=
	\sum_{r=0}^{i-1}
	(-1)^r
	\binom{n+r-1}{r}
	e_{i-r,j+n+r}.
\end{equation}
Then
\begin{equation}\label{eq:right-inverse}
	B^nR_ng=g,
	\qquad g\in D.
\end{equation}
Indeed, for \(0\leq q\leq i-1\), the coefficient of
\(e_{i-q,j+q}\) in \(B^nR_ne_{i,j}\) is
\[
\sum_{r=0}^{q}
(-1)^r
\binom{n+r-1}{r}
\binom n{q-r}.
\]
This is the coefficient of \(z^q\) in the formal product
\[
(1+z)^{-n}(1+z)^n=1.
\]
It is therefore equal to \(1\) for \(q=0\) and to \(0\) for
\(q\geq1\), proving \eqref{eq:right-inverse}.

Moreover, for every \(n\geq1\), it follows that
\begin{equation}\label{eq:binomial-bound}
	\binom{n+r-1}{r}
	\leq
n^r,
	\qquad
	n\geq1,\quad 0\leq r\leq m-1.
\end{equation}

\begin{theorem}\label{thm:F-transitivity-Gm}
	Let \(B\) be a bounded backward shift on
	\(X=\ell^p(V_m,\mu)\), \(1\leq p<\infty\), or on
	\(X=c_0(V_m,\mu)\). For every nonempty finite set
	\(E\subseteq V_m\) and every \(\varepsilon>0\), define
	\[
	\mathcal A(E,\varepsilon)
	=
	\bigcap_{v_{i,j}\in E}
	\left\{
	n\in\mathbb N:
	n^{m-i}|\mu_{i,j+n}|<\varepsilon
	\right\}.
	\]
	Let \(\mathcal F\) be a Furstenberg family on \(\mathbb N_0\). Then the following assertions are equivalent:
	\begin{enumerate}
		\item[\textnormal{(i)}]
		\(B\) is \(\widetilde{\mathcal F}\)-transitive;
		
		\item[\textnormal{(ii)}]
		\(B\) is \(\mathcal F\)-transitive;
		
		\item[\textnormal{(iii)}]
		\(\mathcal A(E,\varepsilon)\in\mathcal F\) for every
		nonempty finite set \(E\subseteq V_m\) and every
		\(\varepsilon>0\).
	\end{enumerate}
	
	Suppose, in addition, that
	\[
	A\in\mathcal F
	\quad\Longrightarrow\quad
	A\cap[N,\infty)\in\mathcal F
	\]
	for every \(N\in\mathbb N\). Then the preceding assertions are
	also equivalent to
	\begin{enumerate}
		\item[\textnormal{(iv)}]
		\(B\) is topologically \(\mathcal F\)-recurrent.
	\end{enumerate}
\end{theorem}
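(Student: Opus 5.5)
We prove (iii)\(\Rightarrow\)(i)\(\Rightarrow\)(ii)\(\Rightarrow\)(iii), and, under the extra hypothesis, (ii)\(\Rightarrow\)(iv)\(\Rightarrow\)(iii). The implication (i)\(\Rightarrow\)(ii) holds because \(\widetilde{\mathcal F}\subseteq\mathcal F\). For (ii)\(\Rightarrow\)(iv), take \(V=U\).

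\textbf{(iii)\(\Rightarrow\)(i).} Fix nonempty open \(U,V\). By density of \(D\) we may choose \(f\in U\cap D\) and \(g\in V\cap D\) together with \(\delta>0\) such that the \(\delta\)-balls around \(f\) and \(g\) lie in \(U\) and \(V\). We look for \(x=f+R_ng\). Let \(j_0\) be larger than the heights of the supports of \(f\) and \(g\). For \(n\geq j_0\), formula \eqref{eq:iterate-basis} gives \(B^nf=0\), so \(B^nx=g\) by \eqref{eq:right-inverse}. Hence \(n\in N_B(U,V)\) as soon as \(n\geq j_0\) and \(\|R_ng\|<\delta\).

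By \eqref{eq:right-inverse-definition} and \eqref{eq:binomial-bound}, \(\|R_ne_{i,j}\|\) is controlled by \(\sum_{r}n^r|\mu_{i-r,j+n+r}|\). Setting \(i'=i-r\) and \(j'=j+r\), each term is bounded by a constant times \(n^{m-i'}|\mu_{i',j'+n}|\), since \(r=i-i'\leq m-i'\). So let \(E\) be the finite set of all \(v_{i',j'}\) arising this way from \(\operatorname{supp} g\). There is then a constant \(C\), depending on \(g\), with \(\|R_ng\|\le C\max_{E} n^{m-i'}|\mu_{i',j'+n}|\).

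For \(\widetilde{\mathcal F}\), we must show that \(N_B(U,V)\) contains \((A+[-N,N])\cap\mathbb N_0\) for some \(A\in\mathcal F\). The trick is to enlarge \(E\) to \(E_N=\{v_{i,j+k}:v_{i,j}\in E,\ |k|\le N,\ j+k\ge1\}\) and to use \(\mathcal A(E_N,\varepsilon')\in\mathcal F\). For \(n\in\mathcal A(E_N,\varepsilon')\) and \(|k|\le N\), the point \(n+k\) satisfies \((n+k)^{m-i}|\mu_{i,j+n+k}|\le 2^{m}\,n^{m-i}|\mu_{i,(j+k)+n}|<2^m\varepsilon'\), provided \(n\ge 2N\). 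This handles all vertices with \(j+k\ge1\). When \(j+k\le0\), the boundedness condition \eqref{eq:lattice-boundedness} compares \(\mu_{i,j+n+k}\) with \(\mu_{i,1+n}\) up to the factor \(C_\mu^{N}\), so it suffices to include those vertices in \(E_N\) as well. The finitely many small \(n\) (namely \(n<\max(j_0,2N)\)) must be excluded. To do this without finite invariance, we add to \(E_N\) vertices \(v_{m,j}\), chosen so that \(|\mu_{m,j+n}|\ge\varepsilon'\) for the small \(n\); such vertices exist because the \(\mu\)'s are nonzero. Then \(\mathcal A(E_N,\varepsilon')\) itself avoids these small \(n\), since the \(v_{m,\cdot}\) factor is \(n^0\). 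I expect this bookkeeping (shifts, boundary vertices, excluding initial times) to be the main obstacle.

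\textbf{(iv)\(\Rightarrow\)(iii)} (which also gives (ii)\(\Rightarrow\)(iii)). Given \(E\) and \(\varepsilon\), take \(U\) to be a small ball around \(y=\sum_{v\in E}e_v\). For \(n\) large, say \(n\ge j_0\), we have \(B^ny=0\). Suppose \(n\in N_B(U,U)\) with \(n\geq j_0\): then there is \(x=y+h\) with \(\|h\|<\eta\) and \(\|B^nx-y\|<\eta\), so \(\|B^nh-y\|<2\eta\). Now read off coordinates. At the coordinate \(v_{i,j}\in E\), the relevant quantity is \(B^nh(v_{i,j})\), a sum of \(\binom{n}{r}\)-weighted entries of \(h\) at \(v_{i+r,j+n-r}\), where \((B^nf)(v_{i,j})=\sum_r\binom nr f(v_{i+r,j+n-r})\). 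Since this value is close to \(1\), some term satisfies \(\binom nr|h(v_{i+r,j+n-r})|\gtrsim 1/m\). The bound \(|h(v)|\le\eta/|\mu(v)|\) then gives \(n^{r}|\mu_{i+r,j+n-r}|\lesssim m\eta\). We want to convert this into \(n^{m-i}|\mu_{i,j+n}|\). Boundedness lets \(|\mu_{i,j+n}|\le C_\mu^{2r}|\mu_{i+r,j+n-r}|\), by moving \(r\) steps down and \(r\) steps up along the edges; this is the delicate point. It gives \(n^{r}|\mu_{i,j+n}|\) small but possibly with \(r<m-i\). To reach the exponent \(m-i\) exactly, I will instead pick \(U\) around \(y\) with \(y\) having entries at the column \(i\) together with \(e_{m,\cdot}\) chosen so that the triangular structure forces the \(r=m-i\) term, mirroring the finite-window estimate of \cite{BLP}. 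Alternatively, I will pair coordinates using the exact inverse relation. Finally, \(\mathcal A(E,\varepsilon)\) contains \(N_B(U,U)\cap[j_0,\infty)\), which lies in \(\mathcal F\) by the extra hypothesis, and upward closedness finishes. For (ii)\(\Rightarrow\)(iii) without that hypothesis, we use \(N_B(U,V)\) with \(U\) a ball around the vector at a high level, so that small \(n\) are excluded automatically.
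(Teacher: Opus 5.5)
Your direct proof of (iii)$\Rightarrow$(i) via the shifted sets $E_N$ is sound. It is a genuinely different route from the paper, which proves (iii)$\Rightarrow$(ii) and imports (ii)$\Rightarrow$(i) from \cite[Theorem~2.1]{BLP} and \cite[Lemma~2.3]{BMPP}. Your observation that $n\in\mathcal A(E_N,\varepsilon')$ with $n>N$ forces $n+k\in\mathcal A(E,C_N\varepsilon')$ for all $|k|\le N$, where $C_N=2^m\max\{1,C_\mu\}^N$, makes that step self-contained.

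The genuine gap is necessity, i.e.\ (ii)$\Rightarrow$(iii) and (iv)$\Rightarrow$(iii). You do not carry it out, and the route you start on fails.
\begin{itemize}
\item From $\binom nr|h(v)|\gtrsim 1/m$ and $|\mu(v)h(v)|<\eta$ you only get $|\mu(v)|\lesssim m\eta\binom nr$. That bounds $n^{-r}|\mu(v)|$, not $n^{r}|\mu(v)|$.
\item The index $r$ is uncontrolled.
\item The inequality $|\mu_{i,j+n}|\le C_\mu^{2r}|\mu_{i+r,j+n-r}|$ is false. \eqref{eq:lattice-boundedness} propagates only along directed paths, and none leads from $v_{i,j+n}$ to $v_{i+r,j+n-r}$. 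For example, $\mu_{i,j}=j!$ is admissible with $C_\mu=1$, yet $|\mu_{1,j+n}|/|\mu_{2,j+n-1}|=j+n$.
\end{itemize}
In fact, test vectors supported on $E$ cannot detect the exponent $m-i$ at all. Take $m=2$ and $h=\frac1n e_{2,n}$; then $B^nh=e_{1,1}$ and $\|h\|_X=|\mu_{2,n}|/n$. So for $\mu\equiv1$, both $N_B(B_X(0,\varepsilon),B_X(e_{1,1},\rho))$ and $N_B(U,U)$ with $U=B_X(e_{1,1},\rho)$ are cofinite, whereas $\mathcal A(\{v_{1,1}\},\varepsilon)$ is finite. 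Your fallback (``entries at column $i$ together with $e_{m,\cdot}$'', ``pair coordinates using the exact inverse relation'') points in the right direction but names no estimate.

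The missing idea is to test against a full window on the last level. Take $y=\sum_{r=1}^{K}e_{m,r}$ with $K=\max_{E}(j+m-i)$, $U=B_X(0,\varepsilon)$ and $V=B_X(y,\delta\mu_0)$. Here $\mu_0=\min\{|\mu_{i,r}|:1\le i\le m,\ 1\le r\le K\}$, and $\varepsilon$ is chosen with $\max_{q<n_0}\|B^q\|\varepsilon<\|y\|_X-\delta\mu_0$. This removes every $q<n_0=\lceil m/\delta\rceil$ from $N_B(U,V)$ without any tail hypothesis; a ball ``around the vector at a high level'' is not such a mechanism.

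For $n\in N_B(U,V)$ and $h\in U$ with $B^nh\in V$, $B^nh$ is within $\delta$ of $1$ on level $m$ and of $0$ on levels $<m$ throughout $[1,m]\times[1,K]$. Use the relation $h(v_{m-l,r+n})=(B^nh)(v_{m-l,r})-\sum_{k=1}^{l}\binom nk h(v_{m-l+k,r+n-k})$, together with $0\le 1/k!-\binom nk n^{-k}\le 1/n$ and $\sum_{k=0}^{l}(-1)^{l-k}/(k!(l-k)!)=0$. Induction on $l$ then gives $|h(v_{m-l,r+n})-(-1)^{l}n^{l}/l!|\le 2^{l}\delta n^{l}$.

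So, for $\delta$ small, $h$ is forced to be large: $|h(v_{i,j+(m-i)+n})|\ge c_m n^{m-i}$ with $c_m=1/(2(m-1)!)$. Hence $n^{m-i}|\mu_{i,j+m-i+n}|<\varepsilon/c_m$. Only then is boundedness applied, along the $m-i$ horizontal edges from $v_{i,j+n}$ to $v_{i,j+m-i+n}$, which is the admissible direction.

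The same estimate with $U=V=B_X(y,\rho)$, using $B^ny=0$ for large $n$ and the tail hypothesis, gives your (iv)$\Rightarrow$(iii). The paper instead proves (iv)$\Rightarrow$(ii) by translating by a finitely supported vector.

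Finally, three small fixes in your (iii)$\Rightarrow$(i):
\begin{itemize}
\item Take $j_0\ge j+m$ on the supports, so that $B^nf=0$ for $n\ge j_0$.
\item Discard $n<j_0+N$ rather than $n<\max(j_0,2N)$.
\item Exclude these finitely many times by shrinking $\varepsilon'$ below the positive numbers $n^{m-i}|\mu_{i,j+n}|$ for $v_{i,j}\in E_N$, $1\le n<j_0+N$. Vertices with $|\mu_{m,j+n}|\ge\varepsilon'$ need not exist.
\end{itemize}
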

	
	\begin{proof}	
			The implication
		\(\mathrm{(i)}\Rightarrow\mathrm{(ii)}\)
		follows immediately from
		\(
		\widetilde{\mathcal F}\subseteq\mathcal F.
		\)
		
We next prove
\(\mathrm{(ii)}\Rightarrow\mathrm{(iii)}\).
Suppose that \(B\) is \(\mathcal F\)-transitive. Fix a nonempty
finite set \(E\subseteq V_m\) and let \(\eta>0\). Put
\[
K=\max_{v_{i,j}\in E}(j+m-i),
\qquad
y=\sum_{r=1}^{K}e_{m,r},
\]
and
\[
\mu_0
=
\min\left\{
|\mu_{i,r}|:
1\leq i\leq m,\quad 1\leq r\leq K
\right\}>0.
\]
Set
\[
C_m=2^{m-1},
\qquad
a_m=\frac1{(m-1)!},
\qquad
c_m=\frac{a_m}{2}.
\]
Choose \(0<\delta<1\) sufficiently small that
\begin{equation}\label{eq:delta-choice}
	C_m\delta\leq\frac{a_m}{2},
	\qquad
	\delta\mu_0<\|y\|_X.
\end{equation}
Let
\[
n_0=\left\lceil\frac m\delta\right\rceil,
\qquad
M_\mu=\max\{1,C_\mu\}^{m-1},
\]
where \(\lceil t\rceil\) denotes the smallest integer greater
than or equal to \(t\).
Choose \(\varepsilon>0\) such that
\begin{equation}\label{eq:epsilon-choice}
	\frac{M_\mu\varepsilon}{c_m}<\eta
\end{equation}
and
\begin{equation}\label{eq:small-times}
	\max_{0\leq q<n_0}\|B^q\|\varepsilon
	<
	\|y\|_X-\delta\mu_0.
\end{equation}
Define
\[
U=B_X(0,\varepsilon),
\qquad
V=B_X(y,\delta\mu_0).
\]
Condition \eqref{eq:small-times} ensures that
\(B^q(U)\cap V=\varnothing\) for \(0\leq q<n_0\). Hence
\[
N_B(U,V)\subseteq[n_0,\infty),
\]
while \(\mathcal F\)-transitivity gives
\(N_B(U,V)\in\mathcal F\).

Fix \(n\in N_B(U,V)\), and choose \(h\in U\) such that
\(B^nh\in V\). Then
\[
\|h\|_X<\varepsilon,
\qquad
\|B^nh-y\|_X<\delta\mu_0.
\]
Consequently,
\[
|(B^nh-y)(v_{i,r})|
\leq
\frac{\|B^nh-y\|_X}{|\mu_{i,r}|}
<\delta,
\qquad
1\leq i\leq m,\quad 1\leq r\leq K.
\]

We now estimate the coordinates of \(h\) successively.
For \(l=0\), the last inequality gives
\[
|h(v_{m,r+n})-1|
=
|(B^nh-y)(v_{m,r})|
<\delta,
\qquad 1\leq r\leq K.
\]
Suppose that \(1\leq l\leq m-1\) and that
\[
\left|
h(v_{m-s,t+n})-\frac{(-1)^s}{s!}n^s
\right|
\leq 2^s\delta n^s
\]
for \(0\leq s<l\) and \(s+1\leq t\leq K\).
For \(l+1\leq r\leq K\), the formula for \(B^n\) yields
\[
h(v_{m-l,r+n})
=
(B^nh)(v_{m-l,r})
-
\sum_{k=1}^{l}
\binom nk h(v_{m-l+k,r+n-k}),
\]
where \(|(B^nh)(v_{m-l,r})|<\delta\).
The induction hypothesis applies to each term in the sum,
since
\[
l-k+1\leq r-k\leq K.
\]

Since \(n\geq n_0\geq m\), the product formula for the binomial
coefficients gives
\[
0\leq
\frac1{k!}-\frac{\binom nk}{n^k}
\leq
\frac{k(k-1)}{2k!\,n}
\leq\frac1n,
\qquad 1\leq k\leq m-1.
\]
Using these bounds, the induction hypothesis, and
\[
\sum_{k=0}^{l}
\frac{(-1)^{l-k}}{k!(l-k)!}=0,
\]
we obtain
\[
\begin{aligned}
	\frac1{n^l}
	\left|
	h(v_{m-l,r+n})-\frac{(-1)^l}{l!}n^l
	\right|
	\leq
	\frac{\delta}{n^l}
	+
	\delta\sum_{k=1}^{l}2^{l-k}
	+
	\frac ln<
	2^l\delta.
\end{aligned}
\]
Here we used \(l+\delta\leq m\) and \(m/n\leq\delta\).
Thus the induction gives
\begin{equation*}
	\left|
	h(v_{m-l,r+n})-\frac{(-1)^l}{l!}n^l
	\right|
	\leq
	2^l\delta n^l
	\leq C_m\delta n^l
\end{equation*}
for \(0\leq l\leq m-1\) and \(l+1\leq r\leq K\).
By \eqref{eq:delta-choice},
\begin{equation}\label{eq:triangular-estimate}
	|h(v_{m-l,r+n})|
	\geq
	\left(\frac1{l!}-C_m\delta\right)n^l
	\geq c_mn^l.
\end{equation}

Now fix \(v_{i,j}\in E\), and put
\[
l=m-i,
\qquad r=j+l.
\]
Then for  \(l+1\leq r\leq K\), and \eqref{eq:triangular-estimate}
 yields
\[
|h(v_{i,j+l+n})|\geq c_mn^l.
\]
Since
\[
|h(v_{i,j+l+n})\mu_{i,j+l+n}|
\leq\|h\|_X<\varepsilon,
\]
it follows that
\[
n^l|\mu_{i,j+l+n}|<\frac{\varepsilon}{c_m}.
\]
Applying \eqref{eq:lattice-boundedness} along \(l\) horizontal
edges gives
\[
|\mu_{i,j+n}|
\leq C_\mu^l|\mu_{i,j+l+n}|
\leq M_\mu|\mu_{i,j+l+n}|.
\]
It follows from \eqref{eq:epsilon-choice} that
\[
n^{m-i}|\mu_{i,j+n}|
\leq
M_\mu n^l|\mu_{i,j+l+n}|
<
\frac{M_\mu\varepsilon}{c_m}
<
\eta.
\]
This holds for every \(v_{i,j}\in E\), proving
\begin{equation}\label{eq:return-set-inclusion}
	N_B(U,V)\subseteq\mathcal A(E,\eta).
\end{equation}
Since \(N_B(U,V)\in\mathcal F\) and \(\mathcal F\) is upward
hereditary, \(\mathcal A(E,\eta)\in\mathcal F\).
Therefore \(\mathrm{(iii)}\) holds.
		
		We now prove
		\(\mathrm{(iii)}\Rightarrow\mathrm{(ii)}\).
		Assume \(\mathrm{(iii)}\), and let \(U,V\subseteq X\) be
		nonempty open sets. Since \(D\) is dense in \(X\), choose
		\[
		f\in U\cap D,
		\qquad
		0\ne g\in V\cap D,
		\]
		and \(\rho>0\) such that
		\[
		B_X(f,\rho)\subseteq U,
		\qquad
		B_X(g,\rho)\subseteq V.
		\]
		Since \(f\) has finite support, there exists \(n_0\geq2\) such that
		\begin{equation}\label{eq:finite-support-vanishing}
			B^nf=0,
			\qquad n\geq n_0.
		\end{equation}
		
		Write
		\[
		g=\sum_{v_{i,j}\in F}g_{i,j}e_{i,j},
		\qquad F=\operatorname{supp}(g),
		\]
		and define
		\[
		F^\sharp
		=
		\left\{
		v_{i-r,j+r}:
		v_{i,j}\in F,\quad 0\leq r\leq i-1
		\right\}.
		\]
		Set
		\[
		C_g=\sum_{v_{i,j}\in F}i|g_{i,j}|>0.
		\]
		Since \(F^\sharp\) is nonempty and finite and all weights
		are nonzero, we also have
		\[
		d=
		\min\left\{
		n^{m-s}|\mu_{s,t+n}|:
		v_{s,t}\in F^\sharp,\quad 1\leq n<n_0
		\right\}>0.
		\]
		Choose
		\[
		0<\eta<\min\left\{\frac{\rho}{C_g},d\right\}.
		\]
		For \(1\leq n<n_0\) and every \(v_{s,t}\in F^\sharp\),
		\[
		n^{m-s}|\mu_{s,t+n}|\geq d>\eta,
		\]
		so \(n\notin\mathcal A(F^\sharp,\eta)\). Hence
		\begin{equation}\label{eq:good-times-large}
			\mathcal A(F^\sharp,\eta)\subseteq[n_0,\infty).
		\end{equation}
		By \(\mathrm{(iii)}\),
		\[
		\mathcal A(F^\sharp,\eta)\in\mathcal F.
		\]
		
		Fix \(n\in\mathcal A(F^\sharp,\eta)\).
		For each \(v_{i,j}\in F\),
		\eqref{eq:right-inverse-definition} and
		\eqref{eq:binomial-bound} give
		\[
		\begin{aligned}
			\|R_ne_{i,j}\|_X
			\leq
			\sum_{r=0}^{i-1}
			\binom{n+r-1}{r}|\mu_{i-r,j+n+r}|
			\leq
			\sum_{r=0}^{i-1}
			n^{m-(i-r)}|\mu_{i-r,j+n+r}|
			< i\eta.
		\end{aligned}
		\]
		Here we used \(n\geq1\), \(r\leq m-(i-r)\), and
		\(v_{i-r,j+r}\in F^\sharp\).
		Therefore,
		\begin{equation}\label{eq:right-inverse-small}
			\|R_ng\|_X
			\leq
			\sum_{v_{i,j}\in F}|g_{i,j}|\|R_ne_{i,j}\|_X
			\leq C_g\eta<\rho.
		\end{equation}
		
		Set \(x_n=f+R_ng\). Then
		\[
		\|x_n-f\|_X=\|R_ng\|_X<\rho,
		\]
		so \(x_n\in U\). Moreover, since \(n\geq n_0\),
		\eqref{eq:right-inverse} and
		\eqref{eq:finite-support-vanishing} yield
		\[
		B^nx_n=B^nf+B^nR_ng=g\in V.
		\]
		Thus
		\[
		\mathcal A(F^\sharp,\eta)\subseteq N_B(U,V).
		\]
		Since \(\mathcal A(F^\sharp,\eta)\in\mathcal F\) and
		\(\mathcal F\) is upward hereditary,
		\(N_B(U,V)\in\mathcal F\).
		Therefore \(B\) is \(\mathcal F\)-transitive.
		
		It remains to prove
		\(\mathrm{(ii)}\Rightarrow\mathrm{(i)}\).
		If \(B\) is \(\mathcal F\)-transitive, then \(B\) is topologically
		transitive and hence hypercyclic. By the hypercyclicity
		characterization for \(G_m\), hypercyclicity and weak mixing are
		equivalent; see \cite[Theorem~2.1]{BLP}. Hence \(B\) is weakly mixing.
		By \cite[Lemma~2.3]{BMPP}, an \(\mathcal F\)-transitive
		and weakly mixing operator is \(\widetilde{\mathcal F}\)-transitive.
		
	Finally, under the additional tail-stability assumption,
	\(\mathrm{(ii)}\Rightarrow\mathrm{(iv)}\) is immediate. Conversely,
	suppose that \(B\) is topologically \(\mathcal F\)-recurrent. Let \(U,V\subseteq X\) be nonempty
		open sets. Choose \(z_0\in V\). Since \(D\) is dense in \(X\),
		we may choose
		\[
		h\in D\cap(U-z_0).
		\]
		Then
		\[
		V_0=V\cap(U-h)
		\]
		is a nonempty open neighborhood of \(z_0\). In particular,
		\[
		V_0\subseteq V,
		\qquad
		h+V_0\subseteq U.
		\]
		
		Since \(h\) has finite support, there exists \(N\in\mathbb N\)
		such that
		\[
		B^nh=0,
		\qquad n\geq N.
		\]
		By topological \(\mathcal F\)-recurrence,
		\[
		N_B(V_0,V_0)\in\mathcal F.
		\]
		Since \(\mathcal F\) is stable under finite deletions,
		\[
		A
		=
		N_B(V_0,V_0)\cap[N,\infty)
		\in\mathcal F.
		\]
		
		For every \(n\in A\), there exists \(z_n\in V_0\) such that
		\(B^nz_n\in V_0\). Hence
		\[
		h+z_n\in U
		\]
		and
		\[
		B^n(h+z_n)
		=
		B^nh+B^nz_n
		=
		B^nz_n
		\in V_0
		\subseteq V.
		\]
		Thus
		\[
		A\subseteq N_B(U,V).
		\]
		Since \(\mathcal F\) is upward hereditary, it follows that
		\(N_B(U,V)\in\mathcal F\). Therefore \(B\) is
		\(\mathcal F\)-transitive.
	\end{proof}	
	\subsection{The bilateral lattice}
	Let
	\[
	\widetilde V_m
	=
	\{v_{i,j}:1\le i\le m,\ j\in\mathbb Z\},
	\]
	and consider the directed lattice graph
	\(\widetilde G_m=[1,m]\times\mathbb Z\) whose edges are
	\[
	v_{i,j}\longrightarrow v_{i,j+1},
	\qquad
	1\le i\le m,\quad j\in\mathbb Z,
	\]
	and
	\[
	v_{i,j}\longrightarrow v_{i+1,j},
	\qquad
	1\le i<m,\quad j\in\mathbb Z.
	\]
	Let \(X=\ell^p(\widetilde V_m,\mu)\), \(1\le p<\infty\), or
	\(X=c_0(\widetilde V_m,\mu)\), and suppose that the backward shift
	\(B\) is bounded on \(X\). We write \(e_{i,j}=e_{v_{i,j}}\).
	
For \(1\leq i\leq m\) and \(j\in\mathbb Z\), write
\(e_{i,j}=e_{v_{i,j}}\), and set \(e_{i,j}=0\) whenever
\(i\notin\{1,\ldots,m\}\). Put
\[
D_{\mathbb Z}
=
c_{00}(\widetilde V_m)
=
\operatorname{span}
\{e_{i,j}:1\leq i\leq m,\ j\in\mathbb Z\}.
\]
With this convention, formulas \eqref{eq:iterate-basis} and
\eqref{eq:right-inverse-definition} remain valid for
\(j\in\mathbb Z\), where \(\binom nr=0\) for \(r>n\).
For each \(n\in\mathbb N\), define \(R_n\) on the coordinate
vectors by \eqref{eq:right-inverse-definition} and extend it
linearly to \(D_{\mathbb Z}\). Each image has finite support
contained in \(\widetilde V_m\), so
\(R_n:D_{\mathbb Z}\to D_{\mathbb Z}\) is well defined.
The same computation as in the unilateral case yields
\[
B^nR_nf=f,
\qquad n\in\mathbb N,\quad f\in D_{\mathbb Z}.
\]
	
	We can now give the Furstenberg-family characterization.
	
	\begin{theorem}\label{thm:F-transitivity-bilateral-lattice}
		Let \(\widetilde G_m=[1,m]\times\mathbb Z\), let
		\(\mu=(\mu_{i,j})\) be a non-zero weight, and let
		\[
		X=\ell^p(\widetilde V_m,\mu),\qquad 1\le p<\infty,
		\]
		or \(X=c_0(\widetilde V_m,\mu)\). Suppose that the backward shift
		\(B\) is bounded on \(X\), and let \(\mathcal F\) be a Furstenberg
		family on \(\mathbb N_0\). Then the following assertions are
		equivalent:
		\begin{enumerate}
			\item[\textnormal{(i)}]
			\(B\) is \(\widetilde{\mathcal F}\)-transitive;
			
			\item[\textnormal{(ii)}]
			\(B\) is \(\mathcal F\)-transitive;
			
			\item[\textnormal{(iii)}]
			for every non-empty finite set
			\(E\subseteq\widetilde V_m\) and every \(\varepsilon>0\),
			\[
			\mathcal A(E,\varepsilon)
			=
			\bigcap_{v_{i,j}\in E}
			\left\{
			n\in\mathbb N:
			n^{m-i}
			\bigl(
			|\mu_{i,j+n}|+|\mu_{i,j-n}|
			\bigr)
			<\varepsilon
			\right\}
			\in\mathcal F.
			\]
		\end{enumerate}
		If, in addition,
		\begin{equation}\label{eq:tail-condition-bilateral}
			A\in\mathcal F
			\quad\Longrightarrow\quad
			A\cap[N,\infty)\in\mathcal F
			\qquad(N\in\mathbb N),
		\end{equation}
		then the preceding assertions are also equivalent to
		\begin{enumerate}
			\item[\textnormal{(iv)}]
			\(B\) is topologically \(\mathcal F\)-recurrent.
		\end{enumerate}
	\end{theorem}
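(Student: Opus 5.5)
We follow the scheme of the proof of Theorem~\ref{thm:F-transitivity-Gm}. The implication (i)\(\Rightarrow\)(ii) is immediate from \(\widetilde{\mathcal F}\subseteq\mathcal F\). For (ii)\(\Rightarrow\)(i) we argue as before. \(\mathcal F\)-transitivity implies hypercyclicity. By \cite[Theorem~2.2]{BLP}, hypercyclicity of \(B\) on \(\widetilde G_m\) is equivalent to weak mixing, and \cite[Lemma~2.3]{BMPP} then upgrades \(\mathcal F\)-transitivity to \(\widetilde{\mathcal F}\)-transitivity. Under \eqref{eq:tail-condition-bilateral}, the equivalence with (iv) is also copied from the unilateral case. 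There we only used that each \(h\in D_{\mathbb Z}\) satisfies \(B^nh=0\) for large \(n\), and this still holds, since \(B^ne_{i,j}\) is supported in columns \(\le j-n+m\).

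For (ii)\(\Rightarrow\)(iii), fix a finite \(E\) and \(\eta>0\). The key point is to control both \(|\mu_{i,j+n}|\) and \(|\mu_{i,j-n}|\) at the same return time \(n\). For this we use a single pair of open sets \(U=B_X(y,\delta\mu_0)\) and \(V=B_X(y,\delta\mu_0)\), where \(y=\sum_{r=1}^{K}e_{m,r}\) and \(\varepsilon\) is small. Both sets are balls of small radius around \(y\), and \(\mu_0\) is the minimum of the weights on the relevant finite window.

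\emph{Forward direction.} Take \(h\) with \(h\approx y\) and \(B^nh\approx y\). Put \(w=h-y\), which is small. For large \(n\) we have \(B^ny=0\) on the window, so \(B^nw\approx y\) on the window. The triangular induction of Theorem~\ref{thm:F-transitivity-Gm} then gives \(|w(v_{m-l,r+n})|\ge c_mn^l\). Combined with \(\|w\|\) small and \eqref{eq:lattice-boundedness}, this yields \(n^{m-i}|\mu_{i,j+n}|<\eta\).

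\emph{Backward direction.} Set \(z=B^nh\). Then \(z\approx y\) while \(z-B^nw=B^ny\). On the window shifted by \(-n\), that is at columns \(r-n\), the function \(B^ny\) has the explicit binomial values \(\binom nl\) at \(v_{m-l,r-n+l}\) for \(1\le r\le K\). These values lie in that region and are of size \(\asymp n^l/l!\). Since \(y\) vanishes there, \(\|z-y\|\) is small, and \(B^nw\) is small pointwise there because \(w\) is small on the preimage region, the quantity \(|B^ny(v)|\,|\mu(v)|\) must be small. This gives \(n^{l}|\mu_{m-l,r-n+l}|\lesssim\varepsilon+\|B^nw\|\).

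The main obstacle is controlling \(\|B^nw\|\), which may grow with \(n\) and would spoil the backward estimate. We intend to avoid it by a different choice of sets: \(U=B(y,\cdot)\) and \(V=B(0,\cdot)\) for the backward direction, and \(U=B(0,\cdot)\) and \(V=B(y,\cdot)\) for the forward direction. We then need a common \(n\). To get it, we take \(U=B(y,\cdot)\) and \(V=B(y,\cdot)\), and decompose \(h=y+w\) with \(\|w\|\) small, so that \(B^nh=B^ny+B^nw\approx y\). The forward estimate uses only the coordinates of \(w\) at columns around \(r+n\), through the triangular system (after removing \(B^ny\), which vanishes on the window for \(n\) large). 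For the backward estimate we read the coordinates of \(B^nh\) on the far-left window \(r-n\). There \(B^nh\approx y=0\), while \(B^nh=B^ny+B^nw\), and \(B^nw\) on that window involves only coordinates of \(w\) at columns in \([r-n,r+m]\), that is, near the original window. Bounding these requires the boundedness constant only finitely many times, since \(B^nw(v)\) at column \(r-n\) collects \(w\) at columns \(r-n+k\), \(k\le n\). This is the genuinely delicate point. If it fails, we fall back on intersecting: since \(\mathcal A(E,\eta)\) is the intersection of the forward and backward sets, we instead use \(2\)-tuples via \(U=B(y,\cdot)\cap B(0,\cdot)\)-type sets in \(X\) built from \(y\) and \(-y\) on disjoint windows.

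For (iii)\(\Rightarrow\)(ii), take \(f,g\in D_{\mathbb Z}\). The naive candidate \(f+R_ng\) no longer works because \(B^nf\neq0\) need not vanish in the bilateral case; \(B^nf\) is supported far to the left. We therefore set \(x_n=f+R_ng-S_n\) with \(S_n\) a correction chosen to cancel \(B^nf\). Since \(B^nf=\sum_{r}\binom nr f_{i,j}e_{i-r,j-n+r}\), we have \(\|B^nf\|\lesssim\sum n^{r}|\mu_{i-r,j-n+r}|\). This is small when \(n\in\mathcal A(E^\flat,\eta)\) for a suitable finite set \(E^\flat\) of shifted vertices, using the \(\mu_{i,j-n}\) part of the condition together with \eqref{eq:lattice-boundedness} to absorb the shifts. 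So instead of correcting, we simply use \(x_n=f+R_ng\), for which \(B^nx_n=g+B^nf\in B(g,\rho)\). The smallness of \(\|R_ng\|\) comes from the \(\mu_{i,j+n}\) part exactly as in the unilateral proof. Hence \(\mathcal A(F^\sharp\cup E^\flat,\eta)\subseteq N_B(U,V)\), and the excluded small times need not be handled separately.
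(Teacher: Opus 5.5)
You handle (i)$\Leftrightarrow$(ii) as the paper does, and your final version of (iii)$\Rightarrow$(ii) also matches it: take $x_n=f+R_ng$ and make both $\|R_ng\|$ and $\|B^nf\|$ small using shifted vertex sets. The treatment of (iv) has a genuine gap. It rests on the claim that $B^nh=0$ for large $n$ when $h\in D_{\mathbb Z}$, and this is false on $\widetilde G_m$. The column index runs over $\mathbb Z$, so nothing is annihilated: for example, $B^ne_{1,j}=e_{1,j-n}\neq0$ by \eqref{eq:iterate-basis}. You note this yourself in your last paragraph. Hence $B^n(h+z_n)=B^nh+B^nz_n$, and topological recurrence gives no control of $\|B^nh\|$. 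Controlling it amounts to showing that the weights $\mu_{i,j-n}$ are small at return times, which is the content of (iii). The paper instead proves (iv)$\Rightarrow$(iii) directly. It applies its finite-window estimate to $U=B_X(g_K,\rho)$, uses \eqref{eq:tail-condition-bilateral} to discard times below $n_0$, and then invokes (iii)$\Rightarrow$(ii).

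In (ii)$\Rightarrow$(iii) the backward direction is left open, and bounding through $\|B^nw\|$ uses the wrong quantity. The missing observation is pointwise. The value $(B^nw)(v_{i,j-n})=\sum_{k=0}^{m-i}\binom nk w(v_{i+k,j-k})$ involves only $m-i+1$ coordinates of $w$, all inside the fixed window, where $|w|<\delta$. Hence $|(B^nw)(v_{i,j-n})|\le m\delta n^{m-i}$, which is absorbed by the main term $(B^ny)(v_{i,j-n})=\binom n{m-i}$. This gives $|(B^nh)(v_{i,j-n})|\geq c\,n^{m-i}$ at a vertex outside the support of $y$, so the backward bound holds at the same $n$ as the forward one, and no intersection is needed. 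Your fallback would in any case require $\mathcal F$-transitivity of $B\oplus B$, since $\mathcal F$ is not closed under intersections, and it would still need this same pointwise estimate.

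A second problem is the choice $U=V$. Then $0\in N_B(U,V)$, and since $\mathcal F$ is not assumed tail-stable in (ii)$\Rightarrow$(iii), you cannot discard the times $n<n_0$ where the window estimate fails. So $N_B(U,U)\subseteq[0,n_0)\cup\mathcal A(E,\eta)$ does not give $\mathcal A(E,\eta)\in\mathcal F$. The paper takes $U=B_X(g_K,\rho)$ and $V=B_X(2g_K,\rho)$, with $g_K$ on row $m$ over columns $[-K,K]$. Your window $[1,K]$ would also miss vertices of $E$ with nonpositive columns. It then chooses $\rho(1+\|B^q\|)<\|B^qg_K-2g_K\|_X$ for $q<n_0$, which forces $B^q(U)\cap V=\varnothing$ at all small times.
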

\begin{proof}
	The equivalence
	\(\mathrm{(i)}\Longleftrightarrow\mathrm{(ii)}\)
	follows from the same argument as in the proof of
	Theorem~\ref{thm:F-transitivity-Gm}, using
	\cite[Theorem~2.2]{BLP} in place of
	\cite[Theorem~2.1]{BLP}.
	
We now prove
\(\mathrm{(iii)}\Rightarrow\mathrm{(ii)}\).
Assume \(\mathrm{(iii)}\), and let \(U,V\subseteq X\) be
nonempty open sets. Since \(D_{\mathbb Z}\) is dense in \(X\),
choose nonzero vectors
\[
f\in U\cap D_{\mathbb Z},
\qquad
g\in V\cap D_{\mathbb Z},
\]
and \(\rho>0\) such that
\[
B_X(f,\rho)\subseteq U,
\qquad
B_X(g,\rho)\subseteq V.
\]

For \(h\in\{f,g\}\), define
\[
E_h
=
\left\{
v_{i-r,j+r}:
v_{i,j}\in\operatorname{supp}(h),\quad
0\leq r\leq i-1
\right\},
\]
and set
\[
C_h
=
\sum_{v_{i,j}\in\operatorname{supp}(h)}
i|h(v_{i,j})|>0.
\]
Put \(E=E_f\cup E_g\), and choose
\[
0<\theta<
\min\left\{\frac{\rho}{C_f},\frac{\rho}{C_g}\right\}.
\]
Since \(E\) is nonempty and finite, condition \(\mathrm{(iii)}\)
gives
\[
\mathcal A(E,\theta)\in\mathcal F.
\]

Fix \(n\in \mathcal A(E,\theta)\). For each
\(v_{i,j}\in\operatorname{supp}(f)\cup\operatorname{supp}(g)\),
\eqref{eq:iterate-basis},
\eqref{eq:right-inverse-definition}, and
\eqref{eq:binomial-bound} yield
\[
\begin{aligned}
	\|B^ne_{i,j}\|_X+\|R_ne_{i,j}\|_X
	&\leq
	\sum_{r=0}^{i-1}
	n^r
	\bigl(
	|\mu_{i-r,j+r-n}|+|\mu_{i-r,j+r+n}|
	\bigr)\\
	&\leq
	\sum_{r=0}^{i-1}
	n^{m-i+r}
	\bigl(
	|\mu_{i-r,j+r-n}|+|\mu_{i-r,j+r+n}|
	\bigr)\\
	&\leq i\theta.
\end{aligned}
\]
Here we used
\(\binom nr\leq n^r\), \(n\geq1\),
\(r\leq m-i+r\), and \(v_{i-r,j+r}\in E\).
Consequently, for \(h\in\{f,g\}\),
\begin{equation}\label{eq:bilateral-finite-estimate}
	\begin{aligned}
		\|B^nh\|_X+\|R_nh\|_X
		\leq
		\sum_{v_{i,j}\in\operatorname{supp}(h)}
		|h(v_{i,j})|
		\bigl(
		\|B^ne_{i,j}\|_X+\|R_ne_{i,j}\|_X
		\bigr)<\rho.
	\end{aligned}
\end{equation}

Set \(x_n=f+R_ng\). Then
\[
\|x_n-f\|_X=\|R_ng\|_X<\rho,
\]
so \(x_n\in U\). Moreover,
\eqref{eq:right-inverse} gives
\[
\|B^nx_n-g\|_X
=
\|B^nf\|_X
<
\rho,
\]
and hence \(B^nx_n\in V\). Thus
\[
\mathcal A(E,\theta)\subseteq N_B(U,V).
\]
Since \(\mathcal A(E,\theta)\in\mathcal F\) and \(\mathcal F\) is upward hereditary,
\(N_B(U,V)\in\mathcal F\).
Therefore \(B\) is \(\mathcal F\)-transitive.

		We first establish a finite-window estimate. Fix a nonempty
		finite set \(E\subseteq\widetilde V_m\), \(\varepsilon>0\),
		and nonzero scalars \(a,b\). Put
		\[
		J=\max_{v_{i,j}\in E}|j|,
		\]
		choose an integer \(K>J+m\), and set
		\[
		g_K=\sum_{j=-K}^{K}e_{m,j},
		\qquad
		\mu_K=\min_{\substack{1\leq i\leq m\\ |j|\leq K}}
		|\mu_{i,j}|>0.
		\]
		Define
		\[
		\beta=\max\{|a|,|b|\},
		\qquad
		c_1=\frac{|b|}{2(m-1)!},
		\qquad
		c_2=\frac{|a|}{2(m-1)!}.
		\]
		Choose
		\[
		0<\delta<2^{-m}\min\{c_1,c_2\},
		\]
	put
	\[
	n_0 = \max\left\{ K+J+1,\, \left\lceil\frac{m\beta}{\delta}\right\rceil \right\}
	\quad \text{and} \quad
	0 < \rho < \min\left\{ \delta\mu_K,\, \frac{\varepsilon}{c_1^{-1}+c_2^{-1}} \right\}.
	\]
		
	We follow the induction used in the proof of
	\(\mathrm{(ii)}\Rightarrow\mathrm{(iii)}\) in
	Theorem~\ref{thm:F-transitivity-Gm} for the positive direction,
	and obtain an additional estimate for the negative direction.	Fix
		\[
		n\in
		N_B\bigl(B_X(ag_K,\rho),B_X(bg_K,\rho)\bigr)
		\cap[n_0,\infty),
		\]
		and choose \(f\in X\) such that
		\[
		\|f-ag_K\|_X<\rho,
		\qquad
		\|B^nf-bg_K\|_X<\rho.
		\]
		Then
		\begin{equation}\label{eq:window-pointwise}
			|f(v_{i,j})-ag_K(v_{i,j})|<\delta,
			\qquad
			|(B^nf)(v_{i,j})-bg_K(v_{i,j})|<\delta
		\end{equation}
		for \(1\leq i\leq m\) and \(|j|\leq K\).
		
		Since \(n\geq n_0\geq m\), we have
		\[
		0\leq\frac{\binom nk}{n^k}\leq\frac1{k!}\leq1,
		\qquad
		0\leq\frac1{k!}-\frac{\binom nk}{n^k}\leq\frac1n,
		\qquad 0\leq k\leq m-1.
		\]
		For the positive direction, \eqref{eq:window-pointwise} gives
		\[
		|f(v_{m,j+n})-b|<\delta,
		\qquad |j|\leq K.
		\]
		Suppose that
		\[
		\left|
		f(v_{m-s,t+n})-b\frac{(-1)^s}{s!}n^s
		\right|
		\leq2^s\delta n^s
		\]
		holds for \(0\leq s<\ell\) and \(-K+s\leq t\leq K\).
		For \(-K+\ell\leq j\leq K\), the coordinate formula for
		\(B^n\) gives
		\[
		f(v_{m-\ell,j+n})
		=
		(B^nf)(v_{m-\ell,j})
		-
		\sum_{k=1}^{\ell}
		\binom nk f(v_{m-\ell+k,j+n-k}).
		\]
		Using the induction hypothesis,
		\(|(B^nf)(v_{m-\ell,j})|<\delta\), and
		\[
		-\sum_{k=1}^{\ell}
		\frac{(-1)^{\ell-k}}{k!(\ell-k)!}
		=
		\frac{(-1)^\ell}{\ell!},
		\]
		we obtain
		\[
		\begin{aligned}
			\frac1{n^\ell}
			\left|
			f(v_{m-\ell,j+n})
			-b\frac{(-1)^\ell}{\ell!}n^\ell
			\right|
			&\leq
			\frac{\delta}{n^\ell}
			+\delta\sum_{k=1}^{\ell}2^{\ell-k}
			+\frac{|b|\ell}{n}\\
			&\leq
			\delta\sum_{k=1}^{\ell}2^{\ell-k}
			+\frac{m\beta}{n}
			\leq2^\ell\delta.
		\end{aligned}
		\]
		Here we used \(\delta<\beta\) and \(m\beta/n\leq\delta\).
		Thus
		\begin{equation}\label{eq:positive-asymptotic-bilateral}
			\left|
			f(v_{m-\ell,j+n})
			-b\frac{(-1)^\ell}{\ell!}n^\ell
			\right|
			\leq2^\ell\delta n^\ell
		\end{equation}
		for \(0\leq\ell\leq m-1\) and \(-K+\ell\leq j\leq K\).
		
		Now fix \(v_{i,j}\in E\) and put \(\ell=m-i\).
		Since \(K>J+m\), all the coordinates in
		\[
		(B^nf)(v_{i,j-n})
		=
		\sum_{k=0}^{\ell}
		\binom nk f(v_{i+k,j-k})
		\]
		lie in the fixed window. Hence \eqref{eq:window-pointwise}
		yields
		\[
		\left|
		(B^nf)(v_{i,j-n})-a\binom n\ell
		\right|
		\leq
		\delta\sum_{k=0}^{\ell}\binom nk
		\leq m\delta n^\ell.
		\]
		Together with \eqref{eq:positive-asymptotic-bilateral},
		this gives
		\begin{equation}\label{eq:bilateral-lower-bounds}
			\begin{aligned}
				|f(v_{i,j+n})|
				&\geq
				n^\ell\left(\frac{|b|}{\ell!}-2^\ell\delta\right)
				\geq c_1n^\ell,\\
				|(B^nf)(v_{i,j-n})|
				&\geq
				n^\ell\left(\frac{|a|}{\ell!}-m\delta-\frac{|a|}{n}\right)
				\geq c_2n^\ell.
			\end{aligned}
		\end{equation}
		For the last inequalities, we used
		\[
		\frac1{\ell!}\geq\frac1{(m-1)!},
		\qquad
		\frac{|a|}{n}\leq\delta,
		\qquad
		m+1\leq2^m,
		\qquad
		2^m\delta<\min\{c_1,c_2\}.
		\]
		
		Since \(n\geq n_0>K+J\), we have
		\(g_K(v_{i,j+n})=g_K(v_{i,j-n})=0\).
		The norm bounds on \(f-ag_K\) and \(B^nf-bg_K\), together
		with \eqref{eq:bilateral-lower-bounds}, therefore imply
		\[
		\begin{aligned}
			n^\ell\bigl(|\mu_{i,j+n}|+|\mu_{i,j-n}|\bigr)
			&\leq
			\frac{|\mu_{i,j+n}f(v_{i,j+n})|}{c_1}
			+
			\frac{|\mu_{i,j-n}(B^nf)(v_{i,j-n})|}{c_2}\\
			&<
			\rho(c_1^{-1}+c_2^{-1})
			<\varepsilon.
		\end{aligned}
		\]
		Thus \(n\in\mathcal A(E,\varepsilon)\), proving
		\begin{equation}\label{eq:window-inclusion}
			N_B\bigl(B_X(ag_K,\rho),B_X(bg_K,\rho)\bigr)
			\cap[n_0,\infty)
			\subseteq\mathcal A(E,\varepsilon).
		\end{equation}
		
		We now prove \(\mathrm{(ii)}\Rightarrow\mathrm{(iii)}\).
		Suppose that \(B\) is \(\mathcal F\)-transitive. Fix a nonempty
		finite set \(E\subseteq\widetilde V_m\) and \(\varepsilon>0\).
		Apply \eqref{eq:window-inclusion} with \(a=1\) and \(b=2\),
		and denote the resulting constants by \(K,n_0,\rho_0\).
		Since \(B^qg_K\ne2g_K\) for every \(q\geq0\), as seen by
		evaluating at \(v_{m,K}\), we may choose
		\[
		0<\rho<
		\min\left\{
		\rho_0,\,
		\min_{0\leq q<n_0}
		\frac{\|B^qg_K-2g_K\|_X}{1+\|B^q\|}
		\right\}.
		\]
		Put
		\[
		U=B_X(g_K,\rho),
		\qquad
		V=B_X(2g_K,\rho).
		\]
		The choice of \(\rho\) excludes all times \(0\leq q<n_0\),
		so \eqref{eq:window-inclusion} gives
		\[
		N_B(U,V)\subseteq\mathcal A(E,\varepsilon).
		\]
		Since \(N_B(U,V)\in\mathcal F\) and \(\mathcal F\) is upward
		hereditary, \(\mathcal A(E,\varepsilon)\in\mathcal F\).
		Thus \(\mathrm{(iii)}\) holds.
		
		Finally, \(\mathrm{(ii)}\Rightarrow\mathrm{(iv)}\) is immediate.
		To prove \(\mathrm{(iv)}\Rightarrow\mathrm{(iii)}\), assume
		\eqref{eq:tail-condition-bilateral} and suppose that \(B\) is
		topologically \(\mathcal F\)-recurrent. Fix a non-empty finite set
		\(E\subseteq\widetilde V_m\) and \(\varepsilon>0\). Apply
		\eqref{eq:window-inclusion} with \(a=b=1\), and let
		\(K,n_0,\rho\) be the corresponding constants.
		Put
		\[
		U
		=
		B_X(g_K,\rho).
		\]
		Then
		\[
		N_B(U,U)\in\mathcal F.
		\]
		By \eqref{eq:tail-condition-bilateral},
		\[
		N_B(U,U)\cap[n_0,\infty)
		\in\mathcal F.
		\]
		From \eqref{eq:window-inclusion},
		\[
		N_B(U,U)\cap[n_0,\infty)
		\subseteq
		\mathcal A(E,\varepsilon).
		\]
		Hence
		\[
		\mathcal A(E,\varepsilon)\in\mathcal F
		\]
		by the upward heredity of \(\mathcal F\). Thus (iii) holds, and the
		proof is complete.
	\end{proof}

\section{\(\mathcal F\)-transitivity on the infinite lattice graph}

Baranov, Lishanskii and Papathanasiou
\cite[Theorem~2.3]{BLP} obtained a sufficient condition for mixing
and a necessary condition for hypercyclicity on the infinite lattice,
leaving a complete characterization open. We characterize
\(\mathcal F\)-transitivity for radial weights on \(\ell^2\),
thereby partially answering their open problem.

Let \(G_\infty=(V_\infty,E_\infty)\) be the directed lattice graph
with vertex set
\[
V_\infty=\{v_{i,j}:i,j\in\mathbb N_0\}
\]
and edges
\[
v_{i,j}\longrightarrow v_{i+1,j},
\qquad
v_{i,j}\longrightarrow v_{i,j+1},
\qquad i,j\in\mathbb N_0.
\]
A finite portion of \(G_\infty\) is shown in
Figure~\ref{fig:infinite-lattice-graph}.

\begin{figure}[htbp]
	\centering
	\includegraphics[width=0.6\textwidth]
	{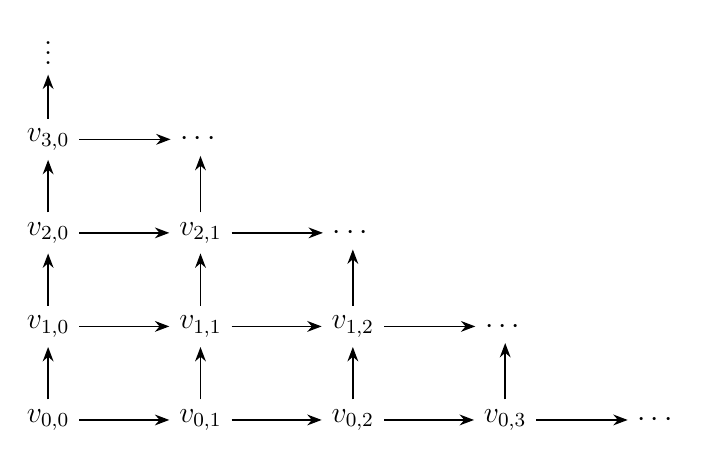}
	\caption{A finite portion of the directed lattice \(G_\infty\).}
	\label{fig:infinite-lattice-graph}
\end{figure}

Let \((\omega_n)_{n\geq0}\) be a sequence of nonzero scalars
and define the \emph{radial weight}
\[
\mu_{i,j}=\omega_{i+j},
\qquad i,j\in\mathbb N_0.
\]
We work on the Hilbert space
\[
X=\ell^2(V_\infty,\mu),
\qquad
\|f\|_X^2
=
\sum_{i,j\geq0}|\omega_{i+j}f(v_{i,j})|^2,
\]
and assume throughout this section that \(B\) is bounded on \(X\).
For \(f\in X\) and \(n\in\mathbb N_0\), its iterates satisfy
\begin{equation}\label{eq:iterate-infinite-lattice}
	(B^nf)(v_{i,j})
	=
	\sum_{r=0}^{n}
	\binom nr f(v_{i+r,j+n-r}),
	\qquad i,j\in\mathbb N_0.
\end{equation}
\subsection{Finite-dimensional reduction and auxiliary estimates}
We first reduce the action of the iterates of \(B\) between diagonal
levels to a family of finite-dimensional  matrices and
derive the estimates needed in the proof of the main theorem.
For \(q\in\mathbb N_0\), let
\[
D_q
=
\{v_{r,q-r}:0\le r\le q\}
\]
and
\[
X_q
=
\{f\in X:\operatorname{supp}(f)\subseteq D_q\}.
\]
Since the sets \(D_q\), \(q\in\mathbb N_0\), form a partition of
\(V_\infty\), the spaces \(X_q\) are pairwise orthogonal and
\[
X
=
\biggl(
\bigoplus_{q=0}^{\infty}X_q
\biggr)_{\ell^2}.
\]

For each \(q\in\mathbb N_0\), define
\[
J_q:X_q\longrightarrow\mathbb C^{q+1}
\]
by
\[
J_qf
=
\bigl(
\omega_qf(v_{0,q}),
\omega_qf(v_{1,q-1}),
\dots,
\omega_qf(v_{q,0})
\bigr).
\]
Then \(J_q\) is an isometric isomorphism.
	
	We first describe the action of the iterates of \(B\) between two
	diagonal levels.
	
	\begin{lemma}\label{lem:diagonal-transfer}
		For every \(q\in\mathbb N_0\) and \(n\in\mathbb N\),
		\[
		J_q\bigl(B^n|_{X_{q+n}}\bigr)J_{q+n}^{-1}
		=
		\frac{\omega_q}{\omega_{q+n}}P_{q,n},
		\]
		where
		\[
		P_{q,n}:\mathbb C^{q+n+1}\longrightarrow\mathbb C^{q+1}
		\]
		is given by
		\[
		(P_{q,n}x)_r
		=
		\sum_{\ell=0}^{n}\binom n\ell x_{r+\ell},
		\qquad 0\le r\le q.
		\]
		Moreover, \(P_{q,n}\) has full row rank. Consequently,
		\[
		B^n|_{X_{q+n}}:X_{q+n}\longrightarrow X_q
		\]
		is surjective.
	\end{lemma}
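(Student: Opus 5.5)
The plan is to compute the matrix of \(B^n\) on \(X_{q+n}\) directly from the iterate formula \eqref{eq:iterate-infinite-lattice}, and then to read off full row rank from a triangular submatrix.

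First, fix \(f\in X_{q+n}\). For \(v_{i,j}\) with \(i+j=q'\), every vertex \(v_{i+r,j+n-r}\) appearing in \eqref{eq:iterate-infinite-lattice} lies on the diagonal \(D_{q'+n}\). Hence \((B^nf)(v_{i,j})=0\) unless \(q'=q\), so \(B^n\) maps \(X_{q+n}\) into \(X_q\). Next, write \(x=J_{q+n}f\), so that \(x_k=\omega_{q+n}f(v_{k,q+n-k})\) for \(0\le k\le q+n\). For the \(r\)-th coordinate of \(J_q\) we evaluate at \(v_{r,q-r}\):
\[
(J_qB^nf)_r=\omega_q\sum_{\ell=0}^n\binom n\ell f(v_{r+\ell,q-r+n-\ell})=\frac{\omega_q}{\omega_{q+n}}\sum_{\ell=0}^n\binom n\ell x_{r+\ell}.
\]
This uses that \(v_{r+\ell,(q+n)-(r+\ell)}\) is the \((r+\ell)\)-th vertex of \(D_{q+n}\) in the ordering used by \(J_{q+n}\). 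The indices \(r+\ell\) range within \([0,q+n]\), so no boundary terms are lost, and the identity \(J_q(B^n|_{X_{q+n}})J_{q+n}^{-1}=\frac{\omega_q}{\omega_{q+n}}P_{q,n}\) follows.

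For full row rank, note that \(P_{q,n}\) is a \((q+1)\times(q+n+1)\) banded Toeplitz matrix whose row \(r\) has entries \(\binom n\ell\) in columns \(r+\ell\), \(0\le\ell\le n\). Restricting to the first \(q+1\) columns gives a square upper triangular matrix, since row \(r\) has no entries in columns below \(r\), and its diagonal entries are all \(\binom n0=1\). This minor has determinant \(1\), so \(P_{q,n}\) has rank \(q+1\) and is therefore surjective onto \(\mathbb C^{q+1}\).

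Finally, \(J_q\) and \(J_{q+n}\) are isomorphisms and \(\omega_q/\omega_{q+n}\neq0\), so \(B^n|_{X_{q+n}}=J_q^{-1}\frac{\omega_q}{\omega_{q+n}}P_{q,n}J_{q+n}\) is surjective onto \(X_q\). The only delicate point is bookkeeping: matching the ordering of coordinates in \(J_q\) (first index \(i\) increasing) with the index shift \(r\mapsto r+\ell\). Nothing else presents an obstacle.
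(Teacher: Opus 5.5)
Your proposal is correct and follows the paper's proof essentially step for step: you evaluate the iterate formula at \(v_{r,q-r}\), use radiality of the weight to factor out \(\omega_q/\omega_{q+n}\), get full row rank from the upper triangular leading \((q+1)\times(q+1)\) block with unit diagonal, and transfer surjectivity through the isometric isomorphisms \(J_q\) and \(J_{q+n}\). Your explicit check that \(B^nf\) vanishes off \(D_q\) for \(f\in X_{q+n}\) is a small detail that the paper leaves implicit.
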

	\begin{proof}
		Let \(f\in X_{q+n}\) and \(0\le r\le q\). Then
		\[
		(B^nf)(v_{r,q-r})
		=
		\sum_{\ell=0}^{n}
		\binom n\ell
		f(v_{r+\ell,q+n-r-\ell}).
		\]
		Every vertex occurring in this sum belongs to \(D_{q+n}\). Since
		the weight is radial, all vertices in \(D_q\) have weight
		\(\omega_q\), whereas all vertices in \(D_{q+n}\) have weight
		\(\omega_{q+n}\). It follows that
		\[
		J_qB^nf
		=
		\frac{\omega_q}{\omega_{q+n}}
		P_{q,n}J_{q+n}f,
		\]
		which proves the asserted identity.
		
		The submatrix formed by the first \(q+1\) columns of \(P_{q,n}\)
		is upper triangular, with all diagonal entries equal to
		\(\binom n0=1\). Hence
		\[
		\operatorname{rank}P_{q,n}=q+1.
		\]
		Thus \(P_{q,n}\) has full row rank and is therefore surjective.
		Since \(J_q\) and \(J_{q+n}\) are isometric isomorphisms and
		\(\omega_q/\omega_{q+n}\ne0\), the asserted surjectivity of
		\(B^n|_{X_{q+n}}\) follows.
	\end{proof}
Throughout, \(A^*=\overline{A}^{\,T}\) denotes the conjugate
transpose of a complex matrix \(A\). We shall use two classical summation identities.
Vandermonde's convolution, the binomial form of the
Chu--Vandermonde identity
\cite[Corollary~2.2.3]{AAR}, states that
\begin{equation}\label{eq:vandermonde-convolution}
	\sum_{k\in\mathbb Z}
	\binom uk\binom v{t-k}
	=
	\binom{u+v}{t},
	\qquad u,v\in\mathbb N_0,\quad t\in\mathbb Z,
\end{equation}
where binomial coefficients outside their usual range are
understood to be zero.
Writing
\[
(a)_0=1,
\qquad
(a)_j=a(a+1)\cdots(a+j-1)
\quad (j\geq1),
\]
the Pfaff--Saalsch\"utz summation formula
\cite[Eq.~(2.2.8)]{AAR} takes the form
\begin{equation}\label{eq:pfaff-saalschutz}
	\sum_{j=0}^{N}
	\frac{(-N)_j(a)_j(b)_j}
	{j!(c)_j(1+a+b-c-N)_j}
	=
	\frac{(c-a)_N(c-b)_N}
	{(c)_N(c-a-b)_N},
\end{equation}
where \(N\in\mathbb N_0\), provided all denominators are
nonzero.

\begin{lemma}\label{lem:pascal-gram}
	Let \(q\in\mathbb N_0\) be fixed. For integers \(n\geq q\),
	set
	\[
	G_{q,n}=P_{q,n}P_{q,n}^*,
	\qquad
	A_{q,n}=\binom{2n}{n}^{-1}G_{q,n}.
	\]
	Then
	\begin{equation}\label{eq:gram-binomial}
		(G_{q,n})_{r,s}
		=
		\binom{2n}{n+r-s},
		\qquad 0\leq r,s\leq q.
	\end{equation}
	Moreover,
	\begin{equation}\label{eq:LDL-factorization}
		A_{q,n}
		=
		L_{q,n}
		\operatorname{diag}
		\bigl(d_0^{(n)},\ldots,d_q^{(n)}\bigr)
		L_{q,n}^*,
	\end{equation}
	where \(L_{q,n}=(\ell_{r,j}^{(n)})_{0\leq r,j\leq q}\)
	is unit lower triangular with
	\[
	\ell_{r,j}^{(n)}
	=
	\binom rj
	\frac{n!(n+j)!}{(n-r+j)!(n+r)!},
	\qquad 0\leq j\leq r\leq q,
	\]
	and
	\begin{equation}\label{eq:def-djn}
		d_j^{(n)}
		=
		j!\prod_{k=1}^{j}
		\frac{2n+k}{(n+k)^2},
		\qquad 0\leq j\leq q,
	\end{equation}
	with the empty product understood to be \(1\).
\end{lemma}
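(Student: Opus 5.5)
First I compute the Gram entries directly. Since the matrix entries are real, \((P_{q,n}P_{q,n}^*)_{r,s}=\sum_{t}(P_{q,n})_{r,t}(P_{q,n})_{s,t}\), and \((P_{q,n})_{r,t}=\binom n{t-r}\). Hence
\[
(G_{q,n})_{r,s}=\sum_{t}\binom n{t-r}\binom n{t-s}
=\sum_{k}\binom nk\binom n{n-k-r+s}
=\binom{2n}{n+r-s}.
\]
Here I substitute \(k=t-r\), use \(\binom n{t-s}=\binom n{n-t+s}\), and apply the Vandermonde convolution \eqref{eq:vandermonde-convolution} with \(u=v=n\) and \(t=n+s-r\). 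The hypothesis \(n\geq q\) guarantees that every column index \(t\) with a nonzero contribution lies in the range \(0\le t\le q+n\), so no terms are lost. This proves \eqref{eq:gram-binomial}. Consequently
\[
(A_{q,n})_{r,s}=\frac{n!^2}{(n+r-s)!(n-r+s)!},
\]
which is a real symmetric matrix.

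For the factorization, the matrices \(L_{q,n}\) and the diagonal are real, so \(L^*=L^T\). Since an \(LDL^T\) factorization with unit lower triangular \(L\) and nonzero pivots is unique, it suffices to verify the entrywise identity
\[
\sum_{j=0}^{\min(r,s)}\ell_{r,j}^{(n)}d_j^{(n)}\ell_{s,j}^{(n)}=\frac{n!^2}{(n+r-s)!(n-r+s)!},
\qquad 0\le s\le r\le q .
\]
The case \(s>r\) then follows by symmetry. Writing \(d_j^{(n)}=j!\,\frac{(2n+1)_j\, n!^2}{(n+j)!^2}\), the \(j\)-th summand becomes
\[
\binom rj\binom sj j!\,\frac{n!^4(2n+1)_j}{(n-r+j)!(n-s+j)!(n+r)!(n+s)!}.
\]
I then express everything as ratios to the \(j=0\) term. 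Using \(\binom rj=(-1)^j(-r)_j/j!\) and \(1/(n-r+j)!=\frac{1}{(n-r)!(n-r+1)_j}\), the sum turns into
\[
\frac{n!^4}{(n-r)!(n-s)!(n+r)!(n+s)!}\sum_{j}\frac{(-s)_j(-r)_j(2n+1)_j}{j!\,(n-r+1)_j(n-s+1)_j}.
\]
This is a terminating \({}_3F_2\) at argument \(1\), of the form \eqref{eq:pfaff-saalschutz} with \(N=s\), \(a=-r\), \(b=2n+1\), and \(c=n-r+1\). I check the Saalschützian balance condition: \(1+a+b-c-N=1-r+2n+1-n+r-1-s=n-s+1\), which matches the second denominator parameter. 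All denominators are nonzero because \(n\geq q\geq r,s\).

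The summation formula gives
\[
\frac{(n+1)_s\,(-n-r)_s}{(n-r+1)_s\,(-n-2r)_s},
\]
since \(c-a=n+1\), \(c-b=-n-r\), and \(c-a-b=-n-2r\). Hmm — \(c-a-b=n-r+1+r-2n-1=-n\), so the factor is actually \((-n)_s\), giving
\[
\frac{(n+1)_s(-n-r)_s}{(n-r+1)_s(-n)_s}.
\]
I convert these using \((-x)_s=(-1)^s x!/(x-s)!\) and \((y+1)_s=(y+s)!/y!\):
\[
\frac{(n+s)!}{n!}\cdot\frac{(n+r)!}{(n+r-s)!}\cdot\frac{(n-r)!}{(n-r+s)!}\cdot\frac{(n-s)!}{n!}.
\]
Multiplying by the prefactor, the factorials \((n-r)!\), \((n-s)!\), \((n+r)!\) and \((n+s)!\) cancel, leaving \(n!^2/((n+r-s)!(n-r+s)!)\), as required.

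The main obstacle is the bookkeeping in rewriting the sum as a balanced \({}_3F_2\). The parameters must be matched so that the balance condition holds exactly, and sign conventions for \((-r)_j\) and \((-s)_j\) have to be tracked carefully. If the direct matching fails, I would instead test small cases \(s=0,1\) to calibrate, and then fix the parameters so that the formula reproduces these cases.
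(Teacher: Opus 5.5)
Your proposal is correct and follows the paper's proof essentially step for step: Vandermonde's convolution gives the Gram entries. For $0\le s\le r\le q$, you reduce each entry of $L_{q,n}\operatorname{diag}(d_0^{(n)},\ldots,d_q^{(n)})L_{q,n}^*$ to $\ell_{r,0}^{(n)}\ell_{s,0}^{(n)}$ times a balanced ${}_3F_2$, and evaluate it by Pfaff--Saalsch\"utz with the same parameters $N=s$, $a=-r$, $b=2n+1$, $c=n-r+1$, obtaining the same value $\frac{(n+1)_s(-n-r)_s}{(n-r+1)_s(-n)_s}$.

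In a write-up, drop the mid-computation correction of $(-n-2r)_s$ and the closing ``calibration'' remark. The appeal to uniqueness of the $LDL^T$ factorization is superfluous. The hypothesis $n\ge q$ plays no role in the Gram computation; it is needed only to keep the factorials and Pochhammer denominators in the factorization step well defined and nonzero.
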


\begin{proof}
	By the definition of \(P_{q,n}\) and
	\eqref{eq:vandermonde-convolution},
	\[
	\begin{aligned}
		(G_{q,n})_{r,s}
		=
		\sum_{t=0}^{q+n}
		\binom n{t-r}\binom n{t-s}=
		\sum_{k\in\mathbb Z}
		\binom nk\binom n{n+r-s-k}
		=
		\binom{2n}{n+r-s}.
	\end{aligned}
	\]
	Here we used \(k=t-s\) and the symmetry of the binomial
	coefficients. This proves \eqref{eq:gram-binomial} and gives
	\begin{equation}\label{4.2.1}
		(A_{q,n})_{r,s}
		=
		\frac{(n!)^2}
		{(n+r-s)!(n-r+s)!}.
	\end{equation}
	
	Put
	\[
	D_{q,n}
	=
	\operatorname{diag}
	\bigl(d_0^{(n)},\ldots,d_q^{(n)}\bigr).
	\]
	Since both \(A_{q,n}\) and
	\(L_{q,n}D_{q,n}L_{q,n}^*\) are real symmetric,
	it suffices to consider \(0\leq s\leq r\leq q\).
	The identities
	\[
	\frac{\ell_{r,j}^{(n)}}{\ell_{r,0}^{(n)}}
	=
	\frac{(-1)^j(-r)_j(n+1)_j}
	{j!(n-r+1)_j},
	\qquad
	d_j^{(n)}
	=
	\frac{j!(2n+1)_j}{(n+1)_j^2}
	\]
	yield
	\[
	\sum_{j=0}^{s}
	\ell_{r,j}^{(n)}d_j^{(n)}\ell_{s,j}^{(n)}
	=
	\ell_{r,0}^{(n)}\ell_{s,0}^{(n)}
	\sum_{j=0}^{s}
	\frac{(-r)_j(-s)_j(2n+1)_j}
	{j!(n-r+1)_j(n-s+1)_j}.
	\]
	
	Apply \eqref{eq:pfaff-saalschutz} with
	\[
	N=s,\qquad
	a=-r,\qquad
	b=2n+1,\qquad
	c=n-r+1.
	\]
	Then \(1+a+b-c-N=n-s+1\), and all denominators are
	nonzero because \(s\leq r\leq n\). Hence
	\[
	\sum_{j=0}^{s}
	\frac{(-r)_j(-s)_j(2n+1)_j}
	{j!(n-r+1)_j(n-s+1)_j}
	=
	\frac{(n+1)_s(-n-r)_s}
	{(n-r+1)_s(-n)_s}.
	\]
	Using
	\[
	\ell_{r,0}^{(n)}\ell_{s,0}^{(n)}
	=
	\frac{(n!)^4}
	{(n-r)!(n+r)!(n-s)!(n+s)!}
	\]
	and rewriting the rising factorials as factorial ratios,
	we obtain
	\[
	\begin{aligned}
		\bigl(L_{q,n}D_{q,n}L_{q,n}^*\bigr)_{r,s}
		&=
		\ell_{r,0}^{(n)}\ell_{s,0}^{(n)}
		\frac{(n+1)_s(-n-r)_s}
		{(n-r+1)_s(-n)_s}\\
		&=
		\frac{(n!)^2}
		{(n+r-s)!(n-r+s)!}
		=
		(A_{q,n})_{r,s},
	\end{aligned}
	\]
	where the last equality follows from \eqref{4.2.1}.
	This proves \eqref{eq:LDL-factorization}.
\end{proof}

Before stating the asymptotic estimate, we clarify our notation.
Let
\[
T:\mathbb C^N\longrightarrow\mathbb C^m,
\qquad N\geq m,
\]
be a matrix with full row rank. Throughout, matrix norms are
the operator norms induced by the Euclidean norms; thus
\[
\|T\|_{2\to2}
=
\sup_{\substack{x\in\mathbb C^N\\\|x\|_2=1}}
\|Tx\|_2,
\]
where \(\|\cdot\|_2\) denotes the Euclidean norm.

We denote by \(s_{\min}(T)\) its
\emph{smallest positive singular value}. Equivalently
(see, for instance, \cite[Sections~2.6 and~4.2]{HJ}),
\begin{equation}\label{4.3.1}
	s_{\min}(T)
	=
	\lambda_{\min}(TT^*)^{1/2}
	=
	\min_{\substack{y\in\mathbb C^m\\\|y\|_2=1}}
	\|T^*y\|_2.
\end{equation}
By Lemma~\ref{lem:diagonal-transfer}, \(P_{q,n}\) has full row
rank, so this notation applies to \(P_{q,n}\).

\begin{lemma}\label{lem:pascal-singular}
	For every fixed \(q\in\mathbb N_0\),
	\begin{equation}\label{eq:sharp-singular-asymptotic}
		s_{\min}(P_{q,n})
		\sim
		\gamma_q\,2^n n^{-q/2-1/4},
		\qquad n\to\infty,
	\end{equation}
	where
	\[
	\gamma_q
	=
	\Biggl(
	\frac{2^q q!}
	{\sqrt{\pi}\binom{2q}{q}}
	\Biggl)^{1/2}.
	\]
	Consequently, there exist constants \(c_q,C_q>0\) such that
	\[
	c_q2^n(n+1)^{-q/2-1/4}
	\le
	s_{\min}(P_{q,n})
	\le
	C_q2^n(n+1)^{-q/2-1/4},
	\qquad n\ge1.
	\]
\end{lemma}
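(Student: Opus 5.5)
By \eqref{4.3.1}, \(s_{\min}(P_{q,n})^2=\lambda_{\min}(G_{q,n})=\binom{2n}{n}\lambda_{\min}(A_{q,n})\). Since \(\binom{2n}{n}\sim 4^n/\sqrt{\pi n}\), the claimed asymptotic is equivalent to
\[
\lambda_{\min}(A_{q,n})\sim \frac{2^q q!}{\binom{2q}{q}}\,n^{-q},\qquad n\to\infty.
\]
To prove this I will use the factorization \eqref{eq:LDL-factorization} of Lemma~\ref{lem:pascal-gram}, valid for \(n\ge q\).

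\textbf{Step 1: asymptotics of the factors.} By \eqref{eq:def-djn}, each factor of \(d_j^{(n)}\) satisfies \((2n+k)/(n+k)^2\sim 2/n\). Hence \(d_j^{(n)}\sim j!\,2^j n^{-j}\), so the diagonal entries decay at strictly different polynomial rates. Moreover, for fixed \(j\le r\le q\) the ratio \(n!(n+j)!/((n-r+j)!(n+r)!)\) tends to \(1\). Therefore
\[
L_{q,n}\to L:=\Bigl(\binom rj\Bigr)_{0\le j\le r\le q},
\]
the lower-triangular Pascal matrix, which is invertible.

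\textbf{Step 2: reduction to the inverse.} Write \(\lambda_{\min}(A_{q,n})=\|A_{q,n}^{-1}\|^{-1}\) and
\[
A_{q,n}^{-1}=L_{q,n}^{-*}D_{q,n}^{-1}L_{q,n}^{-1}.
\]
The entries of \(D_{q,n}^{-1}\) grow like \(n^j/(j!2^j)\), so the dominant term is the last one. Explicitly,
\[
A_{q,n}^{-1}=\sum_j (d_j^{(n)})^{-1}u_j^{(n)}u_j^{(n)*},
\]
where \(u_j^{(n)}\) is the \(j\)-th column of \(L_{q,n}^{-*}\). All terms with \(j<q\) are \(O(n^{q-1})\), while the \(j=q\) term is a rank-one matrix of norm \((d_q^{(n)})^{-1}\|u_q^{(n)}\|^2\). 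Consequently
\[
\|A_{q,n}^{-1}\|=(d_q^{(n)})^{-1}\|u_q^{(n)}\|_2^2\,(1+o(1)).
\]

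\textbf{Step 3: computing \(\|u_q\|^2\).} We have \(u_q^{(n)}\to u_q\), the last column of \(L^{-*}\), i.e.\ the conjugate of the last row of \(L^{-1}\). Since \(L^{-1}=((-1)^{r-j}\binom rj)\), this row is \(((-1)^{q-j}\binom qj)_j\). Hence
\[
\|u_q\|^2=\sum_j\binom qj^2=\binom{2q}{q}.
\]
This gives \(\lambda_{\min}(A_{q,n})\sim q!\,2^q n^{-q}/\binom{2q}{q}\), which is exactly what is needed. Note that \(\gamma_q^2\) includes the factor \(1/\sqrt\pi\), coming from \(\binom{2n}{n}\sim 4^n/\sqrt{\pi n}\). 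Combining with Step 0 and taking square roots yields
\[
s_{\min}^2\sim \frac{4^n}{\sqrt{\pi n}}\cdot\frac{2^q q!}{\binom{2q}{q}}\,n^{-q},
\]
which is \eqref{eq:sharp-singular-asymptotic}.

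\textbf{Step 4: the two-sided bound.} The asymptotic gives the bounds for \(n\) large, with \(n\) replaced by \(n+1\) at the cost of constants. For the finitely many remaining \(n\ge1\), including \(n<q\) where the factorization is not used, \(s_{\min}(P_{q,n})>0\) by the full-rank statement of Lemma~\ref{lem:diagonal-transfer}. Adjusting \(c_q,C_q\) then covers all \(n\ge1\).

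The main obstacle is the perturbation argument in Step 2: I must justify rigorously that the lower-order terms do not affect the leading asymptotic. This follows from uniform boundedness of \(L_{q,n}^{-1}\), since its entries converge, together with the strict gap \(n^{q}\) versus \(n^{q-1}\) between the diagonal scales.
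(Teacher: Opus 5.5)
Your proposal is correct and follows essentially the same route as the paper. Both reduce via $s_{\min}(P_{q,n})^2=\binom{2n}{n}\lambda_{\min}(A_{q,n})$ and Stirling, then use the factorization $A_{q,n}=L_{q,n}D_{q,n}L_{q,n}^*$ to show $\|A_{q,n}^{-1}\|_{2\to2}\sim (d_q^{(n)})^{-1}\|L_q^{-*}e_q\|_2^2=(d_q^{(n)})^{-1}\binom{2q}{q}$. Your rank-one expansion $\sum_j (d_j^{(n)})^{-1}u_j^{(n)}u_j^{(n)*}$, with the $O(n^{q-1})$ bound on the terms $j<q$, is just the unscaled form of the paper's operator-norm limit $d_q^{(n)}A_{q,n}^{-1}\to L_q^{-*}e_qe_q^*L_q^{-1}$.
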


\begin{proof}
	Fix \(q\in\mathbb N_0\). By
	Lemma~\ref{lem:pascal-gram},
	\[
	P_{q,n}P_{q,n}^*
	=
	\binom{2n}{n}A_{q,n},
	\qquad
	A_{q,n}=L_{q,n}\operatorname{diag}
	\bigl(d_0^{(n)},\ldots,d_q^{(n)}\bigr)L_{q,n}^*.
	\]
	
	We first determine the smallest eigenvalue of \(A_{q,n}\).
	For fixed \(0\le j\le r\le q\),
	\[
	\ell_{r,j}^{(n)}
	=
	\binom rj
	\frac{n!(n+j)!}{(n-r+j)!(n+r)!}
	\longrightarrow
	\binom rj,
	\qquad n\to\infty.
	\]
	Thus
	\[
	L_{q,n}\longrightarrow L_q,
	\qquad n\to\infty,
	\]
	where
	\[
	(L_q)_{r,j}
	=
	\begin{cases}
		\binom rj,&j\le r,\\
		0,&j>r.
	\end{cases}
	\]
	The matrix \(L_q\) is the lower triangular Pascal matrix and
	\begin{equation}\label{4.3.2}
	(L_q^{-1})_{r,j}
	=
	\begin{cases}
		(-1)^{r-j}\binom rj,&j\le r,\\
		0,&j>r.
	\end{cases}
	\end{equation}
	From \eqref{eq:def-djn},
	\[
	d_j^{(n)}
	=
	j!\prod_{k=1}^{j}\frac{2n+k}{(n+k)^2},
	\]
	and hence
	\begin{equation}\label{eq:djn-asymptotic}
		d_j^{(n)}
		\sim
		2^j j!n^{-j},
		\qquad 0\le j\le q.
	\end{equation}
	In particular,
	\[
	\frac{d_q^{(n)}}{d_j^{(n)}}\longrightarrow0,
	\qquad 0\le j<q.
	\]
	
	Since
	\[
	A_{q,n}^{-1}
	=
	L_{q,n}^{-*}D_{q,n}^{-1}L_{q,n}^{-1},
	\]
	we obtain
	\[
	d_q^{(n)}A_{q,n}^{-1}
	=
	L_{q,n}^{-*}
	\operatorname{diag}
	\Biggl(
	\frac{d_q^{(n)}}{d_0^{(n)}},
	\ldots,
	\frac{d_q^{(n)}}{d_{q-1}^{(n)}},
	1
	\Biggr)
	L_{q,n}^{-1}.
	\]
	The dimension being fixed, the preceding observations imply
	convergence in operator norm:
	\[
	d_q^{(n)}A_{q,n}^{-1}
	\longrightarrow
	L_q^{-*}e_qe_q^*L_q^{-1},
	\]
	where \(e_q\) is the last standard basis vector of
	\(\mathbb C^{q+1}\). Therefore,
	\begin{equation}\label{eq:inverse-limit}
		d_q^{(n)}\|A_{q,n}^{-1}\|_{2\to2}
		\longrightarrow
		\|L_q^{-*}e_q\|_2^2.
	\end{equation}
	
	By \eqref{4.3.2},
	\[
	L_q^{-*}e_q
	=
	\bigl(
	(-1)^q\binom q0,
	(-1)^{q-1}\binom q1,
	\ldots,
	\binom qq
	\bigr)^T.
	\]
	Hence, by \eqref{eq:vandermonde-convolution},
	\[
	\|L_q^{-*}e_q\|_2^2
	=
	\sum_{j=0}^{q}\binom qj^2
	=
	\binom{2q}{q}.
	\]
	Since
	\[
	\lambda_{\min}(A_{q,n})
	=
	\|A_{q,n}^{-1}\|_{2\to2}^{-1},
	\]
	it follows from \eqref{eq:inverse-limit} and
	\eqref{eq:djn-asymptotic} that
	\begin{equation}\label{eq:A-smallest-eigenvalue}
		\lambda_{\min}(A_{q,n})
		\sim
		\frac{2^q q!}{\binom{2q}{q}}\,n^{-q}.
	\end{equation}
	
	Finally, Stirling's formula yields
	\[
	\binom{2n}{n}
	\sim
	\frac{4^n}{\sqrt{\pi n}}.
	\]
	Combining this with
\eqref{4.3.1} and
	\eqref{eq:A-smallest-eigenvalue}, we obtain
	\[
	\begin{aligned}
		s_{\min}(P_{q,n})^2
		=
		\binom{2n}{n}\lambda_{\min}(A_{q,n})\sim
		\frac{2^q q!}
		{\sqrt{\pi}\binom{2q}{q}}
		\,4^n n^{-q-1/2}.
	\end{aligned}
	\]
	Taking square roots proves
	\eqref{eq:sharp-singular-asymptotic}.
	
	Since \(n\) and \(n+1\) are equivalent as \(n\to\infty\), the
	two-sided estimate in the statement follows for all sufficiently
	large \(n\). The finitely many remaining values of \(n\) can be
	absorbed into the constants \(c_q\) and \(C_q\), because
	\(P_{q,n}\) has full row rank for every \(n\ge1\).
\end{proof}
	
	We next translate the preceding estimate into an estimate for
	right inverses.
	
	\begin{lemma}\label{lem:right-inverse}
		For \(q\in\mathbb N_0\) and \(n\ge1\), let
		\[
		M_{q,n}
		=
		\frac{\omega_q}{\omega_{q+n}}P_{q,n}
		\]
		and define
		\[
		\kappa_q(n)
		=
		\inf\left\{
		\|R\|_{2\to2}:
		M_{q,n}R=I_{\mathbb C^{q+1}}
		\right\}.
		\]
		Then there exist constants \(a_q,b_q>0\) such that
		\begin{equation}\label{eq:kappa-Phi}
			a_q\Phi_q(n)
			\le
			\kappa_q(n)
			\le
			b_q\Phi_q(n),
			\qquad n\ge1,
		\end{equation}
		where
		\[
		\Phi_q(n)
		=
		\frac{|\omega_{q+n}|}{|\omega_q|}
		\frac{(n+1)^{q/2+1/4}}{2^n}.
		\]
		Moreover, for every \(Q\in\mathbb N_0\), there exists \(C_Q>0\)
		such that
		\begin{equation}\label{eq:Phi-comparison}
			\Phi_q(n)\le C_Q\Phi_Q(n),
			\qquad
			0\le q\le Q,\quad n\ge1.
		\end{equation}
	\end{lemma}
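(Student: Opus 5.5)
The plan is to identify \(\kappa_q(n)\) exactly with a smallest singular value and then invoke Lemma~\ref{lem:pascal-singular}. First, for a full-row-rank matrix \(T:\mathbb C^N\to\mathbb C^m\), I will use the standard fact that the minimal operator norm of a right inverse equals \(s_{\min}(T)^{-1}\). This minimum is attained by the Moore--Penrose right inverse \(T^\dagger=T^*(TT^*)^{-1}\).

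The argument has two parts. For any right inverse \(R\) and any unit \(y\), write \(x=Ry\). Then \(Tx=y\), so \(x\) lies in \(T^{-1}(y)\). The element of smallest norm in that affine set is \(T^\dagger y\), because \(T^\dagger y\in\operatorname{ran}T^*=(\ker T)^\perp\). Hence \(\|R\|\ge\|T^\dagger\|\). On the other hand, \(\|T^\dagger\|^2=\|(TT^*)^{-1}\|=\lambda_{\min}(TT^*)^{-1}=s_{\min}(T)^{-2}\) by \eqref{4.3.1}. Applying this with \(T=M_{q,n}\), which has full row rank by Lemma~\ref{lem:diagonal-transfer} since \(\omega_q/\omega_{q+n}\neq0\), and using \(s_{\min}(cT)=|c|\,s_{\min}(T)\), I get
\[
\kappa_q(n)=\frac{|\omega_{q+n}|}{|\omega_q|}\,s_{\min}(P_{q,n})^{-1}.
\]

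Next I substitute the two-sided bound of Lemma~\ref{lem:pascal-singular}:
\[
c_q2^n(n+1)^{-q/2-1/4}\le s_{\min}(P_{q,n})\le C_q2^n(n+1)^{-q/2-1/4}.
\]
Inverting it gives \eqref{eq:kappa-Phi} with \(a_q=C_q^{-1}\) and \(b_q=c_q^{-1}\).

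For \eqref{eq:Phi-comparison}, the ratio is
\[
\frac{\Phi_q(n)}{\Phi_Q(n)}=\frac{|\omega_{q+n}|\,|\omega_Q|}{|\omega_{Q+n}|\,|\omega_q|}\,(n+1)^{(q-Q)/2}.
\]
Since \(q\le Q\), the factor \((n+1)^{(q-Q)/2}\) is at most \(1\). It remains to bound \(|\omega_{q+n}|/|\omega_{Q+n}|\) uniformly in \(n\). Here I use the boundedness condition \eqref{eq:lattice-boundedness} for the radial weight: every vertex on \(D_k\) has children on \(D_{k+1}\), so \(|\omega_k|\le C_\mu|\omega_{k+1}|\) for all \(k\). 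Iterating \(Q-q\) times gives
\[
|\omega_{q+n}|\le C_\mu^{Q-q}|\omega_{Q+n}|.
\]
The remaining factor \(|\omega_Q|/|\omega_q|\) is independent of \(n\), and there are only finitely many \(q\le Q\). So I can take
\[
C_Q=\max_{0\le q\le Q}\max\{1,C_\mu\}^{Q-q}\,\frac{|\omega_Q|}{|\omega_q|}.
\]

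The main point needing care is the exact identification of \(\kappa_q(n)\) with \(s_{\min}(M_{q,n})^{-1}\), in particular the lower bound showing that every right inverse has norm at least \(\|T^\dagger\|\). The minimal-norm-solution argument above handles it.
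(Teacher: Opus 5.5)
Your proposal is correct and follows the paper's proof. Both identify \(\kappa_q(n)=s_{\min}(M_{q,n})^{-1}=\frac{|\omega_{q+n}|}{|\omega_q|}\,s_{\min}(P_{q,n})^{-1}\) via the Moore--Penrose right inverse, invert the two-sided bound of Lemma~\ref{lem:pascal-singular}, and control \(|\omega_{q+n}|/|\omega_{Q+n}|\) by iterating the one-step weight ratio that boundedness of \(B\) provides. The only difference is cosmetic: you get \(\|R\|\ge\|M_{q,n}^{\dagger}\|\) from the minimal-norm-solution property (\(M_{q,n}^{\dagger}y\in(\ker M_{q,n})^{\perp}\)), whereas the paper applies \(R^*M_{q,n}^*=I\) to a unit vector \(y\) with \(\|M_{q,n}^*y\|_2=s_{\min}(M_{q,n})\). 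Both arguments are valid.
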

	
	\begin{proof}
	
		By Lemma~\ref{lem:diagonal-transfer}, \(M_{q,n}\) is surjective.
		For a surjective operator between finite-dimensional Hilbert spaces,
		the minimum norm of a right inverse equals the reciprocal of its
		smallest singular value. Indeed, the Moore--Penrose inverse
		\[
		M_{q,n}^{\dagger}
		=
		M_{q,n}^*(M_{q,n}M_{q,n}^*)^{-1}
		\]
		is a right inverse of \(M_{q,n}\), since
		\[
		M_{q,n}M_{q,n}^{\dagger}
		=
		I_{\mathbb C^{q+1}}.
		\]
	Moreover,
	\[
	\|M_{q,n}^{\dagger}\|_{2\to2}
	=
	\|(M_{q,n}M_{q,n}^*)^{-1}\|_{2\to2}^{1/2}
	=
	\lambda_{\min}(M_{q,n}M_{q,n}^*)^{-1/2}
	=
	\frac{1}{s_{\min}(M_{q,n})}.
	\]
		
		Conversely, let \(R\) be any right inverse of \(M_{q,n}\).
		Choose \(y\in\mathbb C^{q+1}\) such that
		\[
		\|y\|_2=1
		\qquad\text{and}\qquad
		\|M_{q,n}^*y\|_2=s_{\min}(M_{q,n}).
		\]
		Since \(M_{q,n}R=I_{\mathbb C^{q+1}}\), taking adjoints gives
		\(
		R^*M_{q,n}^*=I_{\mathbb C^{q+1}}.
		\)
		Consequently,
		\[
		1
		=
		\|y\|_2
		=
		\|R^*M_{q,n}^*y\|_2
		\le
		\|R\|_{2\to2}\|M_{q,n}^*y\|_2
		=
		\|R\|_{2\to2}s_{\min}(M_{q,n}),
		\]
		and hence
		\[
		\|R\|_{2\to2}
		\ge
		\frac{1}{s_{\min}(M_{q,n})}.
		\]
		It follows that
		\[
		\kappa_q(n)
		=
		\frac{1}{s_{\min}(M_{q,n})}.
		\]
		
		Since
		\[
		M_{q,n}
		=
		\frac{\omega_q}{\omega_{q+n}}P_{q,n},
		\]
		we have
		\[
		s_{\min}(M_{q,n})
		=
		\frac{|\omega_q|}{|\omega_{q+n}|}
		s_{\min}(P_{q,n}).
		\]
		Therefore,
		\[
		\kappa_q(n)
		=
		\frac{|\omega_{q+n}|}{|\omega_q|}
		\frac{1}{s_{\min}(P_{q,n})}.
		\]
		Hence \eqref{eq:kappa-Phi} follows from
		Lemma~\ref{lem:pascal-singular}.
		
		Let
		\[
		C_\omega
		=
		\sup_{k\ge0}
		\frac{|\omega_k|}{|\omega_{k+1}|}.
		\]
		The boundedness of \(B\) guarantees \(C_\omega<\infty\). For
		\(0\le q\le Q\),
		\[
		|\omega_{q+n}|
		\le
		C_\omega^{Q-q}|\omega_{Q+n}|.
		\]
		Therefore
		\[
		\begin{aligned}
			\frac{\Phi_q(n)}{\Phi_Q(n)}
			=
			\frac{|\omega_{q+n}|}{|\omega_{Q+n}|}
			\frac{|\omega_Q|}{|\omega_q|}
			(n+1)^{-(Q-q)/2}\le
			C_\omega^{Q-q}
			\frac{|\omega_Q|}{|\omega_q|}.
		\end{aligned}
		\]
		Since \(q\in\{0,\ldots,Q\}\), the right-hand side is bounded by a
		constant depending only on \(Q\), which proves
		\eqref{eq:Phi-comparison}.
	\end{proof}
	
	We also need the following consequence of weak mixing.
	
	\begin{lemma}\label{lem:finite-sum-F}
		If \(T\) is \(\widetilde{\mathcal F}\)-transitive, then
		\(T^{\oplus d}\) is \(\mathcal F\)-transitive for every
		\(d\in\mathbb N\).
	\end{lemma}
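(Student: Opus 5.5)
We will prove the statement by induction on \(d\), after first establishing the key intermediate fact: if \(T\) is \(\widetilde{\mathcal F}\)-transitive, then for all nonempty open sets \(U_1,\dots,U_d,V_1,\dots,V_d\subseteq X\) the intersection \(\bigcap_{k=1}^d N_T(U_k,V_k)\) contains a member of \(\mathcal F\). The claim for \(T^{\oplus d}\) then follows at once. Indeed, a basic open set in \(X^{\oplus d}\) is a product of open sets, and \(N_{T^{\oplus d}}(U_1\times\cdots\times U_d,V_1\times\cdots\times V_d)=\bigcap_k N_T(U_k,V_k)\). Upward heredity of \(\mathcal F\) passes membership from basic open sets to arbitrary ones.

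The main tool will be the standard trick from \cite{BMPP}, that return sets of a weakly mixing operator can be encoded by a single return set. Note first that an \(\widetilde{\mathcal F}\)-transitive operator is weakly mixing, because every member of \(\widetilde{\mathcal F}\) is thick (Remark~\ref{rem:classical-Furstenberg-families}). For \(d=2\), fix \(U_1,U_2,V_1,V_2\). By topological transitivity there is \(k\) with \(U:=U_1\cap T^{-k}(U_2)\neq\varnothing\), and likewise \(\ell\) with \(V:=V_1\cap T^{-\ell}(V_2)\neq\varnothing\). We would like \(N_T(U,V)\) to be a subset of \(N_T(U_1,V_1)\cap(N_T(U_2,V_2)+\text{shift})\), but the shifts \(k,\ell\) are awkward. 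The cleaner route is to use the \(\widetilde{\mathcal F}\) structure directly, as follows.

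Set \(A=N_T(U,V)\in\widetilde{\mathcal F}\). If \(n\in A\), then some \(x\in U\) has \(T^nx\in V\subseteq V_1\), so \(n\in N_T(U_1,V_1)\). Also \(T^kx\in U_2\) and \(T^{\ell}T^nx\in V_2\). This gives \(T^{n+\ell-k}(T^kx)\in V_2\), so \(n+\ell-k\in N_T(U_2,V_2)\) whenever \(n+\ell-k\geq0\). Hence \(A+(\ell-k)\) is contained in \(N_T(U_2,V_2)\) up to finitely many negative elements. Now we use the defining property of \(\widetilde{\mathcal F}\) with \(N\geq|\ell-k|\): there is \(D\in\mathcal F\) such that \((D+[-N,N]_{\mathbb Z})\cap\mathbb N_0\subseteq A\). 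Then \(D\) itself lies in \(A\), and \(D\) also lies in \((A+(\ell-k))\cap\mathbb N_0\) because \(D+(k-\ell)\subseteq A\). So \(D\subseteq N_T(U_1,V_1)\cap N_T(U_2,V_2)\), and the case \(d=2\) is done.

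For general \(d\) we iterate. Choose times \(k_2,\dots,k_d\) so that \(U=U_1\cap T^{-k_2}U_2\cap\cdots\cap T^{-k_d}U_d\neq\varnothing\). This needs weak mixing of all orders, which holds because weak mixing implies \(T^{\oplus d}\) is topologically transitive (Bès–Peris, or the Furstenberg intersection lemma). Choose \(\ell_j\) similarly for the sets \(V_j\). Then take \(N\geq\max_j|\ell_j-k_j|\) and a corresponding \(D\in\mathcal F\) inside \(N_T(U,V)\) together with all its \(N\)-shifts, and argue exactly as in the case \(d=2\). The main obstacle is justifying nonemptiness of these multiple intersections. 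For that we rely on the known fact that a weakly mixing \(T\) has \(T^{\oplus d}\) transitive for every \(d\), applied to the sets \(U_1\times\cdots\times U_d\) and \(U_1\times U_2\times\cdots\) suitably. Alternatively, we can avoid the issue altogether by inducting on \(d\): apply the \(d=2\) argument to the operator \(T^{\oplus(d-1)}\), which is weakly mixing by induction, together with \(T\).
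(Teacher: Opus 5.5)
There is a genuine gap, although it is cheap to repair. Your reduction to product open sets is fine, and so is the idea of encoding all the pairs into one pair $U,V$ and absorbing the offsets with the thickening in $\widetilde{\mathcal F}$. The step that fails is the claim $D+(k-\ell)\subseteq A$.

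The definition of $\widetilde{\mathcal F}$ only gives $(D+[-N,N]_{\mathbb Z})\cap\mathbb N_0\subseteq A$, and $A=N_T(U,V)\subseteq\mathbb N_0$. So if $\ell>k$ and $D$ contains some $m<\ell-k$, then $m+k-\ell<0$ lies outside $A$. Nothing you have shown puts such an $m$ into $N_T(U_2,V_2)$.

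What your argument actually yields is $D\cap[\ell-k,\infty)\subseteq N_T(U_1,V_1)\cap N_T(U_2,V_2)$. Since $\mathcal F$ is an arbitrary Furstenberg family, not assumed stable under finite deletions, this truncated set need not belong to $\mathcal F$. The paper deliberately avoids that stability hypothesis. The inclusion $D\subseteq N_T(U_2,V_2)$ can in fact fail, e.g.\ when $0\in D$ while $U_2\cap V_2=\varnothing$.

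The repair is to choose $\ell\in N_T(V_1,V_2)$ first and then $k\in N_T(U_1,U_2)$ with $k\geq\ell$. This is possible because $N_T(U_1,U_2)\in\widetilde{\mathcal F}$ is infinite. Then $m+k-\ell\geq m\geq0$ for every $m\in D$, so $D+(k-\ell)\subseteq(D+[-N,N]_{\mathbb Z})\cap\mathbb N_0\subseteq A$. Hence $D\subseteq N_T(U_1,V_1)\cap N_T(U_2,V_2)$. For general $d$, choose all the $\ell_j$ first, and then impose $k_j\geq\ell_j$ at each step of the recursion for the $k_j$.

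Two smaller points.
\begin{itemize}
\item The nonemptiness of $U_1\cap T^{-k_2}(U_2)\cap\cdots\cap T^{-k_d}(U_d)$ is not an obstacle and needs no weak mixing of higher order. You can pick $k_2,\ldots,k_d$ recursively using only transitivity, since every partial intersection is a nonempty open set. Its return sets are infinite, so $k_j\geq\ell_j$ can still be arranged.
\item Drop your fallback induction through ``$T^{\oplus(d-1)}$ together with $T$''. The $d=2$ argument intersects $U_1$ with $T^{-k}(U_2)$ inside one space, and it needs the return set of that single operator to lie in $\widetilde{\mathcal F}$. An open subset of $X^{d-1}$ cannot be intersected with a subset of $X$. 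Moreover, the induction hypothesis would only give membership in $\mathcal F$.
\end{itemize}

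For comparison, the paper avoids offsets altogether. It uses weak mixing of $T$ (via \cite[Lemma~2.3]{BMPP}), i.e.\ transitivity of $T\oplus T$, to pick a \emph{common} time $k_j$ for which both $U^{(j-1)}\cap T^{-k_j}(U_j)$ and $V^{(j-1)}\cap T^{-k_j}(V_j)$ are nonempty. Because $T^n$ commutes with $T^{k_j}$, this gives $N_T(U,V)\subseteq\bigcap_jN_T(U_j,V_j)$ directly, and only $\mathcal F$-transitivity of $T$ is used. Your repaired route instead uses the thickening built into $\widetilde{\mathcal F}$ to tolerate different times on the two sides. It therefore needs only plain transitivity and no appeal to weak mixing.
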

	\begin{proof}
		Since \(\widetilde{\mathcal F}\subseteq\mathcal F\), \(T\) is
		\(\mathcal F\)-transitive. Moreover, \(T\) is weakly mixing
		by \cite[Lemma~2.3]{BMPP}.
		
		Fix \(d\in\mathbb N\) and non-empty open sets
		\(U_1,\ldots,U_d,V_1,\ldots,V_d\subseteq X\).
		Set
		\[
		k_1=0,
		\qquad
		U^{(1)}=U_1,
		\qquad
		V^{(1)}=V_1.
		\]
		Since \(T\oplus T\) is topologically transitive, for
		\(j=2,\ldots,d\) we can recursively choose
		\(k_j\in\mathbb N_0\) such that
		\[
		U^{(j)}
		=
		U^{(j-1)}\cap T^{-k_j}(U_j),
		\qquad
		V^{(j)}
		=
		V^{(j-1)}\cap T^{-k_j}(V_j)
		\]
		are both non-empty open sets. Put
		\[
		U=U^{(d)}=\bigcap_{j=1}^{d}T^{-k_j}(U_j),
		\qquad
		V=V^{(d)}=\bigcap_{j=1}^{d}T^{-k_j}(V_j).
		\]
		
		If \(n\in N_T(U,V)\), choose \(x\in U\) such that
		\(T^nx\in V\). For every \(1\leq j\leq d\), we then have
		\[
		T^{k_j}x\in U_j,
		\qquad
		T^n(T^{k_j}x)=T^{k_j}(T^nx)\in V_j.
		\]
		Hence
		\[
		N_T(U,V)
		\subseteq
		\bigcap_{j=1}^{d}N_T(U_j,V_j).
		\]
		Since \(T\) is \(\mathcal F\)-transitive,
		\(N_T(U,V)\in\mathcal F\). The upward heredity of
		\(\mathcal F\) therefore gives
		\[
		N_{T^{\oplus d}}
		\biggl(
		\prod_{j=1}^{d}U_j,\prod_{j=1}^{d}V_j
		\biggr)
		=
		\bigcap_{j=1}^{d}N_T(U_j,V_j)
		\in\mathcal F.
		\]
		As products of non-empty open sets form a base for the topology
		of \(X^d\), \(T^{\oplus d}\) is \(\mathcal F\)-transitive.
	\end{proof}
	
\subsection{Dynamical characterizations and applications}

We now apply the preceding estimates to characterize
\(\mathcal F\)-transitivity for radial weights. Under the isometries
\(J_q\), the restriction of \(B^n\) from \(X_{q+n}\) to \(X_q\) is
represented by the matrix \(M_{q,n}\). By
Lemma~\ref{lem:right-inverse}, the minimum norm of a right inverse of
\(M_{q,n}\) is comparable to \(\Phi_q(n)\), which makes it possible to
express the dynamical properties of \(B\) entirely in terms of the
quantities \(\Phi_q(n)\).

Moreover, a right inverse \(R_{q,n}\) of \(M_{q,n}\) induces the right
inverse
\begin{equation}\label{4.4.1}
	\widetilde R_{q,n}
	=
	J_{q+n}^{-1}R_{q,n}J_q
\end{equation}
of \(B^n|_{X_{q+n}}\), as summarized in the following diagram.
\begin{figure}[H]
	\centering
	\includegraphics[width=0.62\textwidth]
	{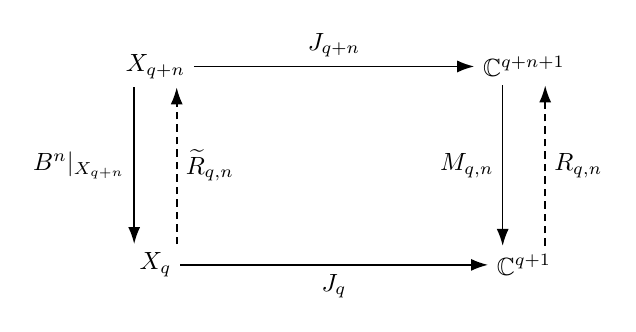}
	\caption{The diagonal-level representation of \(B^n\) and the
		corresponding right inverses.}
	\label{fig:diagonal-level-diagram}
\end{figure}
	\begin{theorem}\label{thm:radial-F-transitivity}
		Let \((\omega_n)_{n\geq0}\) be a sequence of nonzero scalars,
		and put
		\[
		\mu_{i,j}=\omega_{i+j},
		\qquad i,j\geq0.
		\]
		Consider
		\[
		X=\ell^2(V_\infty,\mu),
		\qquad
		\|f\|_X^2
		=
		\sum_{i,j\geq0}
		|\omega_{i+j}f(v_{i,j})|^2.
		\]
		Suppose that \(B\) is bounded on \(X\).
		Let \(\mathcal F\) be a Furstenberg family on
		\(\mathbb N_0\), and let \(\widetilde{\mathcal F}\) be its associated
		enlarged family. Then the following assertions are equivalent:
		\begin{enumerate}
			\item[\textnormal{(i)}]
			\(B\) is \(\widetilde{\mathcal F}\)-transitive;
			
			\item[\textnormal{(ii)}]
			\(B\) is \(\mathcal F\)-transitive;
			
			\item[\textnormal{(iii)}]
			For every \(q\in\mathbb N_0\) and \(\varepsilon>0\),
			\[
			\left\{
			n\in\mathbb N:
			\frac{|\omega_{q+n}|}{|\omega_q|}
			\frac{(n+1)^{q/2+1/4}}{2^n}
			<\varepsilon
			\right\}
			\in\mathcal F.
			\]
		\end{enumerate}
		If, in addition,
		\begin{equation}\label{eq:tail-stable-F}
			A\in\mathcal F
			\quad\Longrightarrow\quad
			A\cap[N,\infty)\in\mathcal F
			\qquad(N\in\mathbb N),
		\end{equation}
		then the preceding assertions are also equivalent to
		\begin{enumerate}
			\item[\textnormal{(iv)}]
			\(B\) is topologically \(\mathcal F\)-recurrent.
		\end{enumerate}
	\end{theorem}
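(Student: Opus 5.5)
Condition (iii) reads \(\{n:\Phi_q(n)<\varepsilon\}\in\mathcal F\). By Lemma~\ref{lem:right-inverse}, \(\Phi_q(n)\asymp\kappa_q(n)\) uniformly in \(n\), so (iii) is equivalent to the same statement with \(\kappa_q(n)\) in place of \(\Phi_q(n)\), after replacing \(\varepsilon\) by \(\varepsilon/b_q\) or \(a_q\varepsilon\). I will prove (iii)\(\Rightarrow\)(i), (i)\(\Rightarrow\)(ii) (trivial, since \(\widetilde{\mathcal F}\subseteq\mathcal F\)), and (ii)\(\Rightarrow\)(iii). I will then add (iv) under \eqref{eq:tail-stable-F}.

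\textbf{(ii)\(\Rightarrow\)(iii).} Fix \(q\) and \(\varepsilon\). Pick a unit vector \(y\in\mathbb C^{q+1}\) with \(\|M_{q,n}^*y\|\) minimal; this \(y\) depends on \(n\), so I will handle it by compactness. Simpler is to use the weight directly: \(\kappa_q(n)\le\|R\|\) for any right inverse, and I need the converse direction. Let \(\Pi_q\) be the orthogonal projection onto \(X_q\). Take \(U=B_X(0,\delta)\) and \(V=B_X(g,\delta)\) for a fixed \(g\in X_q\), \(g\neq0\). If \(\|h\|<\delta\) and \(\|B^nh-g\|<\delta\), then \(\Pi_qB^nh=B^n\Pi_{q+n}h\), because \(B^n\) maps \(X_{q+n}\) to \(X_q\) and every other level elsewhere. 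Hence \(M_{q,n}(J_{q+n}\Pi_{q+n}h)\) lies within \(\delta\) of \(J_qg\) while having a preimage of norm \(<\delta\).

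This controls only a single direction, whereas \(\kappa_q(n)\) requires all directions. I will therefore pass to \(T^{\oplus(q+1)}\): use \((q+1)\) copies with targets \(g_k=J_q^{-1}e_k\) and apply Lemma~\ref{lem:finite-sum-F}. This needs \(\widetilde{\mathcal F}\), so I first prove (ii)\(\Rightarrow\)(i). Then, for \(n\) in the common return set, I obtain vectors \(x_k\) with \(\|x_k\|<\delta\) and \(\|M_{q,n}x_k-e_k\|<\delta\). Let \(W=M_{q,n}[x_0,\dots,x_q]=I+E\), where \(\|E\|\le (q+1)\delta<1/2\). Then \(R=[x_k]W^{-1}\) is a right inverse with \(\|R\|\le 2(q+1)\delta\). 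Taking \(\delta\) small gives \(\kappa_q(n)<\varepsilon'\) on a set in \(\mathcal F\), and heredity finishes the step.

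\textbf{(ii)\(\Rightarrow\)(i).} This needs weak mixing. I plan to derive it from hypercyclicity via the same single-direction argument: an \(\mathcal F_\infty\) version of (iii) with \(\liminf\). The main obstacle is that \(\liminf_n\Phi_q(n)=0\) for each \(q\) separately does not immediately give a common sequence. Using \eqref{eq:Phi-comparison}, \(\Phi_q\le C_Q\Phi_Q\) for \(q\le Q\), so a sequence along which \(\Phi_Q\to0\) serves all \(q\le Q\), and a diagonal choice yields a single sequence. This gives the weak mixing criterion (Bès–Peris along that sequence with \(D=c_{00}\) and the lifted right inverses \(\widetilde R_{q,n}\) of \eqref{4.4.1}). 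Then \cite[Lemma~2.3]{BMPP} gives \(\widetilde{\mathcal F}\)-transitivity. To get the hypercyclic version of (iii) from (ii) without weak mixing, for a single direction I choose the target \(g\in X_Q\) large in \(q\) and instead project using the comparison. I expect this to be the delicate step. A fallback is to note that \(\Phi\)-smallness for the minimal-norm direction suffices: \(s_{\min}(M)\ge\) the norm of \(M^*y\) only for \(y\) near the limiting direction \(L_q^{-*}e_q/\|\cdot\|\) given in the proof of Lemma~\ref{lem:pascal-singular}. I will fix \(g=J_q^{-1}\) of that limiting vector, so a single target controls \(\kappa_q(n)\) up to constants for large \(n\).

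\textbf{(iii)\(\Rightarrow\)(ii).} Given \(f,g\in c_{00}\) supported in levels \(\le Q\), set \(x_n=f+\sum_{q\le Q}\widetilde R_{q,n}\Pi_qg\), using minimal-norm right inverses. Then \(B^nf=0\) for \(n>\) the support level, and \(\|x_n-f\|\le\sum_q\kappa_q(n)\|g\|\le C\,\Phi_Q(n)\) by \eqref{eq:Phi-comparison}. Finitely many small \(n\) are excluded exactly as in Theorem~\ref{thm:F-transitivity-Gm}, by choosing \(\varepsilon\) below \(\min_{n<n_0}\Phi_Q(n)\). Thus \(\{n:\Phi_Q(n)<\varepsilon\}\subseteq N_B(U,V)\).

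\textbf{(iv).} Under \eqref{eq:tail-stable-F}, (iv)\(\Rightarrow\)(ii) is proved verbatim as in Theorem~\ref{thm:F-transitivity-Gm}, using that \(B^nh=0\) eventually for \(h\in c_{00}\).
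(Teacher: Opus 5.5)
Your arguments for (ii)$\Rightarrow$(iii), (iii)$\Rightarrow$(ii) and (iv)$\Rightarrow$(ii) are the paper's. For (ii)$\Rightarrow$(iii) that means $B^{\oplus(q+1)}$, Lemma~\ref{lem:finite-sum-F} and the right inverse $S(MS)^{-1}$. For (iii)$\Rightarrow$(ii) it means lifted minimal-norm right inverses and excluding small $n$. The genuine difference is how you obtain weak mixing in (ii)$\Rightarrow$(i).

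The paper does it structurally. Via $X=\bigoplus_k X_k$ it conjugates $B$ to the operator-valued backward shift $B_{(T_k)}$, $T_k=B|_{X_k}$, and quotes \cite[Proposition~2.2]{MP}. No spectral information beyond Lemma~\ref{lem:right-inverse} enters.

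You stay inside the paper's matrix machinery. A single target gives $\liminf_n\Phi_q(n)=0$ for each $q$, a diagonal sequence makes this simultaneous, and the Hypercyclicity Criterion with the lifted right inverses gives weak mixing by B\`es--Peris. This is sound, with three points to fix.

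First, the single-target step uses more than the statement of Lemma~\ref{lem:pascal-singular}, and your phrasing of it is backwards: $\|M^*y\|_2\ge s_{\min}(M)$ holds for every unit $y$. The correct version goes as follows. Put $v=L_q^{-*}e_q$. The norm convergence $d_q^{(n)}A_{q,n}^{-1}\to vv^*$ in that proof, together with a Rayleigh-quotient argument, shows that every unit minimizing left singular vector $y_n$ of $M_{q,n}$ satisfies $|\langle v,y_n\rangle|\to\|v\|_2$. Now take a unit target $g\in X_q$ with $c=|\langle J_qg,v\rangle|/\|v\|_2>0$. For large return times $n$, the projected preimage $x$ gives $c/2-\delta\le|\langle M_{q,n}x,y_n\rangle|=|\langle x,M_{q,n}^*y_n\rangle|\le\delta\,s_{\min}(M_{q,n})$. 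Hence $\kappa_q(n)\le\delta/(c/2-\delta)$.

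Second, every entry of $v$ is $\pm\binom{q}{j}\neq0$, so even $g=J_q^{-1}e_0$ works. Your first idea of a target in $X_Q$ can therefore be dropped.

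Third, the constant in \eqref{eq:Phi-comparison} depends on $Q$. In the diagonal step you must choose $n_Q$ with $C_Q\Phi_Q(n_Q)<1/Q$.

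What each route buys: yours is self-contained and yields Corollary~\ref{cor:radial-hypercyclic-mixing}(i) directly. Moreover, if you exclude small return times by the choice of radii as in Theorem~\ref{thm:F-transitivity-Gm}, the same single-target estimate proves (ii)$\Rightarrow$(iii) for arbitrary $\mathcal F$ without Lemma~\ref{lem:finite-sum-F}. The paper's route is shorter but rests on the external operator-valued result.
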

	
	\begin{proof}
		We first prove
		\(\mathrm{(i)}\Longleftrightarrow\mathrm{(ii)}\).
		Since \(\widetilde{\mathcal F}\subseteq\mathcal F\),
		\(\mathrm{(i)}\Rightarrow\mathrm{(ii)}\) is immediate.
		
	Conversely, suppose that \(B\) is \(\mathcal F\)-transitive.
	Then \(B\) is topologically transitive and hence hypercyclic.
	Put
	\[
	\mathcal H
	=
	\ell^2((X_k)_k,\mathbb N_0)
	=
	\left\{
	(x_k)_{k\geq0}:
	x_k\in X_k,\quad
	\sum_{k=0}^{\infty}\|x_k\|_X^2<\infty
	\right\},
	\]
	equipped with the norm
	\[
	\|(x_k)_{k\geq0}\|_{\mathcal H}
	=
	\biggl(\sum_{k=0}^{\infty}\|x_k\|_X^2\biggr)^{1/2}.
	\]
	Define
	\[
	\mathcal U:X\longrightarrow\mathcal H,
	\qquad
	\mathcal Uf=(f\mathbf1_{D_k})_{k\geq0}.
	\]
	The orthogonality of the diagonal levels gives
	\[
	\|\mathcal Uf\|_{\mathcal H}^2
	=
	\sum_{k=0}^{\infty}\|f\mathbf1_{D_k}\|_X^2
	=
	\|f\|_X^2.
	\]
	Moreover, \(\mathcal U\) is onto, with inverse
	\[
	\mathcal U^{-1}(x_k)_{k\geq0}
	=
	\sum_{k=0}^{\infty}x_k,
	\]
	where the series converges in \(X\). Thus \(\mathcal U\) is
	an isometric isomorphism.
	
	For \(k\geq1\), put
	\[
	T_k=B|_{X_k}:X_k\longrightarrow X_{k-1}.
	\]
	Since
	\[
	\sup_{k\geq1}\|T_k\|\leq\|B\|<\infty,
	\]
	the associated unilateral operator-valued backward shift
	\[
	B_{(T_k)}:\mathcal H\longrightarrow\mathcal H,
	\qquad
	B_{(T_k)}(x_0,x_1,\ldots)
	=
	(T_1x_1,T_2x_2,\ldots)
	\]
	is bounded. Since \(B(X_0)=\{0\}\) and
	\(B(X_k)\subseteq X_{k-1}\) for \(k\geq1\), we have
	\[
	\mathcal U B=B_{(T_k)}\mathcal U,
	\qquad\text{equivalently}\qquad
	B_{(T_k)}=\mathcal U B\mathcal U^{-1}.
	\]
	Hence \(B_{(T_k)}\) is hypercyclic. By
	\cite[Proposition~2.2]{MP}, it is weakly mixing, and the
	conjugacy above implies that \(B\) is also weakly mixing.
	Since \(B\) is \(\mathcal F\)-transitive,
	\cite[Lemma~2.3]{BMPP} yields that \(B\) is
	\(\widetilde{\mathcal F}\)-transitive. This proves
	\(\mathrm{(i)}\Longleftrightarrow\mathrm{(ii)}\).
		
		We next prove
		\(\mathrm{(ii)}\Rightarrow\mathrm{(iii)}\).
		Assume \(\mathrm{(ii)}\), and fix
		\(q\in\mathbb N_0\) and \(\varepsilon>0\).
		Let \(a_q>0\) be the constant in \eqref{eq:kappa-Phi}.
		For the standard basis vectors \(e_0,\ldots,e_q\) of
		\(\mathbb C^{q+1}\), put
		\[
		u_r=J_q^{-1}e_r,
		\qquad 0\leq r\leq q.
		\]
		Then \(\|u_r\|_X=1\).
		Choose \(\alpha,\beta>0\) such that
		\begin{equation}\label{eq:alpha-beta}
			\sqrt{q+1}\,\alpha<\frac12,
			\qquad
			2\sqrt{q+1}\,\beta<a_q\varepsilon,
			\qquad
			\alpha+\beta<1.
		\end{equation}
		
		By the equivalence just established, \(B\) is
		\(\widetilde{\mathcal F}\)-transitive. Hence
		Lemma~\ref{lem:finite-sum-F} implies that
		\(B^{\oplus(q+1)}\) is \(\mathcal F\)-transitive, and therefore
		\[
		C=
		\bigcap_{r=0}^{q}
		N_B\bigl(B_X(0,\beta),B_X(u_r,\alpha)\bigr)
		\in\mathcal F.
		\]
		Since \(\alpha+\beta<\|u_r\|_X=1\), the corresponding
		balls are disjoint, so \(0\notin C\).
		
		Fix \(n\in C\). For each \(0\leq r\leq q\), choose
		\(x_{r,n}\in X\) such that
		\begin{equation}\label{eq:x-approximation}
			\|x_{r,n}\|_X<\beta,
			\qquad
			\|B^nx_{r,n}-u_r\|_X<\alpha.
		\end{equation}
		Let \(P_k^X\) denote the orthogonal projection onto \(X_k\).
		Since
		\[
		B^n(X_k)=\{0\}\quad(0\leq k<n),
		\qquad
		B^n(X_k)\subseteq X_{k-n}\quad(k\geq n),
		\]
		the orthogonal decomposition of \(X\) and the boundedness
		of \(B^n\) give
		\[
		P_q^XB^n=B^nP_{q+n}^X.
		\]
		Put \(z_{r,n}=P_{q+n}^Xx_{r,n}\).
		As \(P_k^X\) is contractive and \(u_r\in X_q\),
		\eqref{eq:x-approximation} and
		Lemma~\ref{lem:diagonal-transfer} yield
		\begin{equation}\label{eq:matrix-approximation}
			\|J_{q+n}z_{r,n}\|_2<\beta,
			\qquad
			\|M_{q,n}J_{q+n}z_{r,n}-e_r\|_2<\alpha.
		\end{equation}
		
		Define the linear map
		\[
		S_{q,n}:\mathbb C^{q+1}\longrightarrow\mathbb C^{q+n+1},
		\qquad
		S_{q,n}e_r=J_{q+n}z_{r,n}.
		\]
		For every \(\xi=(\xi_0,\ldots,\xi_q)\in\mathbb C^{q+1}\),
		\eqref{eq:matrix-approximation} gives
		\[
		\begin{aligned}
			\|S_{q,n}\xi\|_2
			&\leq
			\beta\sum_{r=0}^{q}|\xi_r|
			\leq
			\sqrt{q+1}\,\beta\|\xi\|_2,\\
			\|(M_{q,n}S_{q,n}-I)\xi\|_2
			&\leq
			\alpha\sum_{r=0}^{q}|\xi_r|
			\leq
			\sqrt{q+1}\,\alpha\|\xi\|_2.
		\end{aligned}
		\]
		Consequently,
		\begin{equation}\label{eq:S-bound}
			\|S_{q,n}\|_{2\to2}\leq\sqrt{q+1}\,\beta,
		\end{equation}
		and
		\begin{equation}\label{eq:MS-bound}
			\|M_{q,n}S_{q,n}-I\|_{2\to2}
			\leq\sqrt{q+1}\,\alpha<\frac12.
		\end{equation}
		Thus \(M_{q,n}S_{q,n}\) is invertible, with
		\[
		\|(M_{q,n}S_{q,n})^{-1}\|_{2\to2}\leq2.
		\]
		It follows that
		\[
		R_{q,n}
		=
		S_{q,n}(M_{q,n}S_{q,n})^{-1}
		\]
		is a right inverse of \(M_{q,n}\), and
		\[
		\kappa_q(n)
		\leq\|R_{q,n}\|
		\leq2\sqrt{q+1}\,\beta
		<a_q\varepsilon.
		\]
		By \eqref{eq:kappa-Phi},
		\[
		\Phi_q(n)\leq\frac{\kappa_q(n)}{a_q}<\varepsilon.
		\]
		Hence
		\[
		C\subseteq
		\{n\in\mathbb N:\Phi_q(n)<\varepsilon\}.
		\]
		Since \(C\in\mathcal F\) and \(\mathcal F\) is upward
		hereditary, \(\mathrm{(iii)}\) follows.
		
		We now prove
		\(\mathrm{(iii)}\Rightarrow\mathrm{(ii)}\).
		Assume \(\mathrm{(iii)}\), and let \(U,V\subseteq X\)
		be nonempty open sets. Choose finitely supported vectors
		\[
		f\in U,
		\qquad
		0\ne g\in V,
		\]
		and \(\rho>0\) such that
		\begin{equation}\label{eq:balls-UV}
			B_X(f,\rho)\subseteq U,
			\qquad
			B_X(g,\rho)\subseteq V.
		\end{equation}
		Write
		\[
		g=\sum_{q=0}^{Q}g_q,
		\qquad g_q\in X_q.
		\]
		Since \(f\) has finite support, choose \(n_0\geq2\) such that
		\begin{equation}\label{eq:f-killed}
			B^nf=0,
			\qquad n\geq n_0.
		\end{equation}
		
		Let \(C_Q\) be the constant in \eqref{eq:Phi-comparison},
		and put
		\[
		D_Q=C_Q\max_{0\leq q\leq Q}b_q>0,
		\]
		where \(b_q\) is as in \eqref{eq:kappa-Phi}.
		Then
		\begin{equation}\label{eq:kappa-highest-level}
			\kappa_q(n)
			\leq b_q\Phi_q(n)
			\leq D_Q\Phi_Q(n),
			\qquad 0\leq q\leq Q,\quad n\geq1.
		\end{equation}
		All these constants are independent of \(n\).
		Since \(\Phi_Q(k)>0\) for every \(k\geq1\), choose
		\begin{equation}\label{eq:epsilon-small}
			0<\varepsilon<
			\min\left\{
			\frac{\rho}{D_Q\|g\|_X},
			\min_{1\leq k<n_0}\Phi_Q(k)
			\right\}.
		\end{equation}
		By \(\mathrm{(iii)}\),
		\[
		A=
		\{n\in\mathbb N:\Phi_Q(n)<\varepsilon\}
		\in\mathcal F.
		\]
		The second bound in \eqref{eq:epsilon-small} excludes every
		\(1\leq n<n_0\), so
		\begin{equation}\label{eq:remove-small-n}
			A\subseteq[n_0,\infty).
		\end{equation}
		
		Fix \(n\in A\).
		For \(0\leq q\leq Q\), choose a minimum-norm right inverse
		\(R_{q,n}\) of \(M_{q,n}\), as in
		Lemma~\ref{lem:right-inverse}. Then
		\[
		M_{q,n}R_{q,n}=I,
		\qquad
		\|R_{q,n}\|_{2\to2}
		=
		\kappa_q(n)
		\leq D_Q\Phi_Q(n)
		<D_Q\varepsilon.
		\]
		Using \eqref{4.4.1}, define
		\begin{equation}\label{eq:def-yn}
			y_n
			=
			\sum_{q=0}^{Q}\widetilde R_{q,n}g_q
			=
			\sum_{q=0}^{Q}
			J_{q+n}^{-1}R_{q,n}J_qg_q.
		\end{equation}
		The summands belong to the pairwise orthogonal spaces
		\(X_{q+n}\). Hence
		\[
		\begin{aligned}
			\|y_n\|_X^2
			=
			\sum_{q=0}^{Q}
			\|R_{q,n}J_qg_q\|_2^2\leq
			D_Q^2\varepsilon^2
			\sum_{q=0}^{Q}\|g_q\|_X^2=
			D_Q^2\varepsilon^2\|g\|_X^2
			<\rho^2.
		\end{aligned}
		\]
		Moreover, by the diagonal transfer identity,
		\[
		B^ny_n
		=
		\sum_{q=0}^{Q}
		J_q^{-1}M_{q,n}R_{q,n}J_qg_q
		=
		\sum_{q=0}^{Q}g_q
		=
		g.
		\]
		Thus \(x_n=f+y_n\) satisfies
		\[
		\|x_n-f\|_X<\rho,
		\qquad
		B^nx_n=B^nf+B^ny_n=g,
		\]
		where we used \(n\geq n_0\).
		By \eqref{eq:balls-UV}, \(x_n\in U\) and \(B^nx_n\in V\).
		Therefore
		\[
		A\subseteq N_B(U,V).
		\]
		Since \(A\in\mathcal F\), upward heredity yields
		\(N_B(U,V)\in\mathcal F\). This proves
		\(\mathrm{(iii)}\Rightarrow\mathrm{(ii)}\), and hence
		\(\mathrm{(i)}\)--\(\mathrm{(iii)}\) are equivalent.
		
		Finally, assume \eqref{eq:tail-stable-F}.
		The implication
		\(\mathrm{(ii)}\Rightarrow\mathrm{(iv)}\) is immediate.
		Conversely, suppose that \(B\) is topologically
		\(\mathcal F\)-recurrent.
		Let \(U,V\subseteq X\) be nonempty open sets.
		Choose finitely supported vectors \(u\in U\), \(v\in V\),
		and \(\eta>0\) such that
		\[
		B_X(u,\eta)\subseteq U,
		\qquad
		B_X(v,\eta)\subseteq V.
		\]
		Put
		\[
		h=u-v,
		\qquad
		V_0=B_X(v,\eta).
		\]
		Then \(h+V_0\subseteq U\) and \(V_0\subseteq V\).
		Since \(h\) has finite support, there exists \(N\in\mathbb N\)
		such that \(B^nh=0\) for every \(n\geq N\).
		
		By topological \(\mathcal F\)-recurrence and
		\eqref{eq:tail-stable-F},
		\[
		A=N_B(V_0,V_0)\cap[N,\infty)\in\mathcal F.
		\]
		For every \(n\in A\), choose \(z_n\in V_0\) such that
		\(B^nz_n\in V_0\). Then
		\[
		h+z_n\in U,
		\qquad
		B^n(h+z_n)=B^nz_n\in V_0\subseteq V.
		\]
		Hence \(A\subseteq N_B(U,V)\).
		Upward heredity gives \(N_B(U,V)\in\mathcal F\).
		Thus \(B\) is \(\mathcal F\)-transitive, proving
		\(\mathrm{(iv)}\Rightarrow\mathrm{(ii)}\).
	\end{proof}
	Combining Theorem~\ref{thm:radial-F-transitivity} with
	Remark~\ref{rem:classical-Furstenberg-families}, we obtain the
	following criteria for hypercyclicity, weak mixing and mixing.
	\begin{corollary}\label{cor:radial-hypercyclic-mixing}
		Under the assumptions of
		Theorem~\ref{thm:radial-F-transitivity}, the following assertions
		hold.
		\begin{enumerate}
			\item[\textnormal{(i)}]
			\(B\) is hypercyclic, equivalently weakly mixing, if and only if
			\[
			\liminf_{n\to\infty}
			\frac{|\omega_{q+n}|}{|\omega_q|}
			\frac{(n+1)^{q/2+1/4}}{2^n}
			=
			0
			\qquad\text{for every }q\in\mathbb N_0.
			\]
			
			\item[\textnormal{(ii)}]
			\(B\) is mixing if and only if
			\[
			\lim_{n\to\infty}
			\frac{|\omega_{q+n}|}{|\omega_q|}
			\frac{(n+1)^{q/2+1/4}}{2^n}
			=
			0
			\qquad\text{for every }q\in\mathbb N_0.
			\]
		\end{enumerate}
	\end{corollary}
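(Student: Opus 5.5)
The corollary will follow by specializing Theorem~\ref{thm:radial-F-transitivity} to the three classical families in Remark~\ref{rem:classical-Furstenberg-families}. Throughout, write
\[
\Phi_q(n)=\frac{|\omega_{q+n}|}{|\omega_q|}\frac{(n+1)^{q/2+1/4}}{2^n},
\]
as in Lemma~\ref{lem:right-inverse}.

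\emph{Part (i).} Take \(\mathcal F=\mathcal F_\infty\). By the remark, hypercyclicity is the same as \(\mathcal F_\infty\)-transitivity. By the equivalence (ii)\(\Leftrightarrow\)(iii) of Theorem~\ref{thm:radial-F-transitivity}, this holds if and only if, for every \(q\) and every \(\varepsilon>0\), the set \(\{n\in\mathbb N:\Phi_q(n)<\varepsilon\}\) is infinite. For fixed \(q\), the sets being infinite for all \(\varepsilon>0\) is exactly \(\liminf_{n\to\infty}\Phi_q(n)=0\). Here I will note that \(\Phi_q(n)>0\) for every \(n\), so ``infinitely many \(n\) with \(\Phi_q(n)<\varepsilon\) for all \(\varepsilon\)'' is equivalent to the liminf being \(0\).

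For the ``equivalently weakly mixing'' clause, I will use the equivalence (i)\(\Leftrightarrow\)(ii) of the theorem with \(\mathcal F=\mathcal F_\infty\). This shows that hypercyclicity implies \(\widetilde{\mathcal F_\infty}\)-transitivity. Every member of \(\widetilde{\mathcal F}\) is thick, so this in turn implies \(\mathcal F_{\mathrm{thick}}\)-transitivity, which is weak mixing. The converse, that weak mixing implies hypercyclicity, is trivial. Alternatively, one can apply the theorem with \(\mathcal F=\mathcal F_{\mathrm{thick}}\) and observe that, once \(B\) is hypercyclic, it is weakly mixing by the proof of (ii)\(\Rightarrow\)(i).

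\emph{Part (ii).} Take \(\mathcal F=\mathcal F_{\mathrm{cof}}\). By the remark, mixing is the same as \(\mathcal F_{\mathrm{cof}}\)-transitivity. By condition (iii) of the theorem, this is equivalent to the set \(\{n:\Phi_q(n)<\varepsilon\}\) being cofinite for all \(q\) and all \(\varepsilon\). That in turn is exactly \(\lim_{n\to\infty}\Phi_q(n)=0\) for every \(q\).

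I expect no real obstacle; the argument is bookkeeping. The only points needing care are, first, that condition (iii) uses \(\mathbb N\) rather than \(\mathbb N_0\), which is harmless for infiniteness and cofiniteness; and second, that the weak-mixing equivalence in (i) relies on the thickness of \(\widetilde{\mathcal F}\)-sets or on the operator-valued shift argument already given in the proof of the theorem.
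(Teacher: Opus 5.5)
Your proposal is correct and takes the same route as the paper, which obtains the corollary simply by specializing Theorem~\ref{thm:radial-F-transitivity} to \(\mathcal F_\infty\) and \(\mathcal F_{\mathrm{cof}}\) through Remark~\ref{rem:classical-Furstenberg-families}, with the weak-mixing clause coming from the (i)\(\Leftrightarrow\)(ii) part of the theorem. Your translations of ``infinite'' into \(\liminf=0\) and of ``cofinite'' into \(\lim=0\) are accurate, as is your remark that restricting to \(n\in\mathbb N\) is harmless (only \(\Phi_q\geq0\) is actually needed there, not strict positivity).
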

The following example shows that radial weights with the same
critical exponential growth rate \(2\) can give rise to both
mixing and non-hypercyclic shifts. It also gives a negative
answer to the sufficiency question raised after
\cite[Proposition~5.1]{BLP}, even in the radial
\(\ell^2\)-setting.
	\begin{example}\label{ex:critical-radial-weights}
		Fix \(a>1/4\). For \(r=1,2\), let
		\[
		\mu_{i,j}^{(r)}=\omega_{i+j}^{(r)},
		\qquad i,j\in\mathbb N_0,
		\]
		where
		\[
		\omega_k^{(1)}=2^ke^{-\sqrt{k}},
		\qquad
		\omega_k^{(2)}=2^k(k+1)^{-a}.
		\]
		Let \(B_r\) be the backward shift on
		\[
		X_r=\ell^2(V_\infty,\mu^{(r)}).
		\]
		Both operators are bounded, since
		\[
		\sup_{k\ge0}
		\frac{|\omega_k^{(r)}|}
		{|\omega_{k+1}^{(r)}|}
		<\infty,
		\qquad r=1,2.
		\]
		Moreover, for both weights,
		\[
		\lim_{i+j\to\infty}
		|\mu_{i,j}^{(r)}|^{1/(i+j)}=2,
		\qquad
		\frac{|\mu_{i,j}^{(r)}|}{2^{i+j}}
		\longrightarrow0.
		\]
		Thus neither part of \cite[Theorem~2.3]{BLP} determines the
		dynamics of \(B_r\).
		
		For the first weight, every fixed \(q\in\mathbb N_0\) satisfies
		\[
		\begin{aligned}
			\frac{|\omega_{q+n}^{(1)}|}{|\omega_q^{(1)}|}
			\frac{(n+1)^{q/2+1/4}}{2^n}
			&=
			e^{\sqrt q-\sqrt{q+n}}
			(n+1)^{q/2+1/4}
			\longrightarrow0.
		\end{aligned}
		\]
		Hence Corollary~\ref{cor:radial-hypercyclic-mixing}\textnormal{(ii)}
		shows that \(B_1\) is mixing.
		
		For the second weight,
		\[
		\begin{aligned}
			\frac{|\omega_{q+n}^{(2)}|}{|\omega_q^{(2)}|}
			\frac{(n+1)^{q/2+1/4}}{2^n}
			&=
			(q+1)^a
			\frac{(n+1)^{q/2+1/4}}
			{(q+n+1)^a}\\
			&\sim
			(q+1)^a n^{q/2+1/4-a}.
		\end{aligned}
		\]
		Choose a fixed \(q\in\mathbb N_0\) such that
		\(
		\frac q2+\frac14>a.
		\)
		For this \(q\), the preceding expression tends to infinity.
		Therefore
		Corollary~\ref{cor:radial-hypercyclic-mixing}\textnormal{(i)}
		shows that \(B_2\) is not hypercyclic.
		
		Nevertheless, the necessary condition in
		\cite[Proposition~5.1]{BLP} is satisfied by \(B_2\). Indeed, for
		every fixed \(i,j\in\mathbb N_0\), Vandermonde's identity and
		Stirling's formula give
		\[
		\begin{aligned}
			\sum_{\ell=0}^{n}
			\frac{\binom n\ell^2}
			{|\mu_{i+\ell,j+n-\ell}^{(2)}|^2}
			&=
			\frac{\binom{2n}{n}}
			{|\omega_{i+j+n}^{(2)}|^2}\\
			&=
			\frac{\binom{2n}{n}(i+j+n+1)^{2a}}
			{4^{i+j+n}}\\
			&\sim
			\frac{4^{-(i+j)}}{\sqrt{\pi}}\,
			n^{2a-1/2}
			\longrightarrow\infty.
		\end{aligned}
		\]
		Thus the necessary condition in
		\cite[Proposition~5.1]{BLP} is not sufficient, even for radial
		weights on \(\ell^2(V_\infty,\mu)\).
	\end{example}
\section{Further questions}

We conclude with two questions suggested by our results.

\begin{enumerate}
	\item[\textnormal{(1)}]
	\emph{Other sequence spaces.}
	Can the radial characterizations of \(\mathcal F\)-transitivity
	on \(G_\infty\) be extended from \(\ell^2(V_\infty,\mu)\) to
	\(\ell^p(V_\infty,\mu)\), \(1\leq p<\infty\), and
	\(c_0(V_\infty,\mu)\)?
	The main difficulty is to obtain sharp right-inverse estimates
	in the corresponding norms, for which the Hilbert space
	singular-value argument no longer applies directly.
	
	\item[\textnormal{(2)}]
	\emph{Non-radial weights.}
	Can one characterize \(\mathcal F\)-transitivity of bounded
	backward shifts on \(\ell^2(V_\infty,\mu)\) by explicit
	conditions on a general, non-radial weight \(\mu\)?
	The difficulty lies in controlling the variation of the weights
	within each diagonal level, since the transfer matrices are
	no longer scalar multiples of the Pascal-type matrices.
\end{enumerate}

\backmatter

\section*{Declarations}

\bmhead{Conflict of Interest}
The authors declare no competing interests with the journal's editors, reviewers, or readers.

\bmhead{Funding}
This work was supported by  the National Natural Science Foundation of
China (Grant Nos. 12571088) and  the Natural Science Foundation of Henan  (No.252300420341).

\bmhead{Data Availability Statement}
No datasets were generated or analyzed in this theoretical study.

\end{document}